\newlength{\BiblioSpacing}
\renewenvironment{thebibliography}[1]{%
  \begin{oldthebibliography}{#1}%
    \setlength{\parskip}{\BiblioSpacing}
    \setlength{\itemsep}{\BiblioSpacing}
}%
{%
\end{oldthebibliography}%
}
\newtheorem{thm}{Theorem}[section]
\newtheorem{coro}[thm]{Corollary}
\newtheorem{conj}[thm]{Conjecture}
\newtheorem{lemma}[thm]{Lemma}
\newtheorem{defn}[thm]{Definition}
\newcommand{\eproof}{\hspace*{\fill}$\Box$\vspace{1mm}}
\newenvironment{proof}{\noindent \emph{Proof.}}{\eproof}
\newcommand\oeis{{\small\sc OEIS}}
\newcommand\ol{\overline} 
\newcommand\seta{\ensuremath{\mathcal S}}
\newcommand\setpn{\ensuremath{\seta_p(n)}}
\newcommand\setp{\ensuremath{\seta_p}}
\newcommand\numa{\ensuremath{A}} 
\newcommand\numpn{\ensuremath{\numa_p(n)}}
\DeclareMathOperator{\asc}{\mathrm{asc}}
\DeclareMathOperator{\des}{\mathrm{des}}
\DeclareMathOperator{\fwd}{\mathrm{{\sc fwd}}}
\DeclareMathOperator{\zeros}{\mathrm{zeros}}
\DeclareMathOperator{\rlmax}{{\mathrm{\mbox{\small\sc RL}max}}}
\DeclareMathOperator{\rlmin}{{\mathrm{\mbox{\small\sc RL}min}}}
\DeclareMathOperator{\lrmax}{{\mathrm{\mbox{\small\sc LR}max}}}
\DeclareMathOperator{\lrmin}{{\mathrm{\mbox{\small\sc LR}min}}}
\def\dd{\makebox[1.1ex]{\rule[.58ex]{.71ex}{.15ex}}}
\newcommand\bx{\ensuremath{\mathbf{x}}}
\newcommand\by{\ensuremath{\mathbf{y}}}
\newcommand\bt{\ensuremath{\mathbf{t}}}
\def\ch#1,#2,{{#1\choose#2}}
\def\clp{{\mathcal P}} 
\def\clr{{\mathcal R}}
\def\om{\ensuremath{\omega}}
\title{Pattern avoidance in ascent sequences}
\author{
  Paul Duncan\\
  \small 25 Vega Court \\
  \small Irvine, CA 92617, USA\\
  \small \texttt{pauldncn@gmail.com}
  \and Einar Steingr\'imsson\thanks{Steingr\'imsson was supported by
    grant no.\ 090038013 from the
    Icelandic Research Fund.}\\
  \small Department of Computer and Information Sciences\\
  \small University of Strathclyde, Glasgow G1 1XH, UK\\
  \small \texttt{einar.steingrimsson@cis.strath.ac.uk}
}
\begin{document}
\maketitle

\small Mathematics Subject Classification: 05A05, 05A15, 05A18, 05A19

\begin{abstract}
Ascent sequences are sequences of nonnegative integers with
restrictions on the size of each letter, depending on the number of
ascents preceding it in the sequence.  Ascent sequences have recently
been related to $(2+2)$-free posets and various other combinatorial
structures.  We study pattern avoidance in ascent sequences, giving
several results for patterns of lengths up to 4, for Wilf equivalence
and for growth rates.  We establish bijective connections between
pattern avoiding ascent sequences and various other combinatorial
objects, in particular with set partitions.  We also make a number of
conjectures related to all of these aspects.
\end{abstract}

\thispagestyle{empty}

\section{Introduction and preliminaries}

An \emph{ascent sequence} is a sequence $x_1x_2\ldots x_n$ of nonnegative integers satisfying $x_1=0$ and, for all $i$ with $1<i\le n$,
$$ x_i\le\asc(x_1x_2\ldots x_{i-1})+1,
$$ where $\asc(x_1x_2\ldots x_k)$ is the number of \emph{ascents} in
the sequence $x_1x_2\ldots x_k$, that is, the number of places $j\ge1$
such that $x_{j}<x_{j+1}$.  An example of such a sequence is
0101312052, whereas 0012143 is not, because the 4 is greater than
$\asc(00121)+1=3$.  Replacing $x_j<x_{j+1}$ in the definition of
$\asc$ by $x_j>x_{j+1}$ gives the number of \emph{descents}.

Ascent sequences became prominent after they were related to the
$(2+2)$-free posets, by Bousquet-M\'elou, Claesson, Dukes and Kitaev
\cite{bcdk}, who also managed to find the generating function counting
these, which was quite a feat.  Ascent sequences have since been
studied in a series of papers by various authors, connecting them to
many other combinatorial structures.  These connections, and
generalizations of them, have exposed what seem to be deep structural
correspondences in these apparently disparate combinatorial objects,
including certain integer matrices, set partitions and permutations,
in addition to the $(2+2)$-free posets.  A good source of references
and further information is \cite[Section~3.2.2]{kit-book}; see also
\cite{claesson-linusson, dkrs, dukes-parviainen,
  kitaev-remmel-2plus2}.

In this paper we study ascent sequences avoiding certain patterns.
Our patterns are analogues of permutation patterns, but they seem to
provide a greater variety of counting sequences, which may perhaps be
explained by the fact that ascent sequences lack the so called trivial
symmetries on permutations---such as reversing a permutation to obtain
another one---that imply numerical equivalence between sets of
permutations avoiding different patterns.

A pattern is a word on nonnegative integers, where repetitions are
allowed.  An occurrence of a pattern $p$ in an ascent sequence
$\bx=x_1x_2\ldots x_n$ is a subsequence $x_{i_1}x_{i_2}\ldots x_{i_k}$
in $\bx$, where $k$ equals the length of $p$, whose letters appear in
the same relative order of size as those in $p$.  For example, the
ascent sequence 0123123 has three occurrences of the pattern 001,
namely in the subsequences 112, 113 and 223.  Note that in an
occurrence of a pattern~$p$, letters corresponding to two equal
letters in $p$ must be equal in the occurrence, such as the 22 in 223,
which correspond to the two 0's in 001.  An ascent sequence $\bx$
\emph{avoids} the pattern $p$ if $\bx$ has no occurrences of $p$.  As
an example, 012321 avoids 001.

%% With respect to occurrences and avoidance, a pattern $p$ is always
%% equivalent to its \emph{standard form}, which is the unique pattern of
%% the same length as $p$ whose letters are in the same relative order as
%% those in $p$ and which uses all the letters $0,1,\ldots,k$ for some
%% $k$.  For example, the patterns 3153 and 2072 both have the standard
%% form 1021.  

Since we write our patterns with nonnegative integers, whereas
permutation patterns have traditionally been written with positive
integers, it is important to note that the traditional permutation
patterns have different names here.  For example, 123 becomes 012, and
231 becomes 120.  We use the latter notation, with nonnegative rather
than positive integers, since ascent sequences are traditionally
defined in such a way as to contain zeros.

To the best of our knowledge, pattern avoidance has not been studied
for ascent sequences so far. Given the success of such studies for
other combinatorial structures, such as permutations and set
partitions, and the strong connections of ascent sequences to other
combinatorial objects mentioned above, it is reasonable to hope for
results of similar significance in the case of ascent sequences.  The
initial results and conjectures presented here, together with the
great variety in the integer sequences counting ascent sequences
avoiding various patterns, seem to indicate that this is fertile
ground for interesting research.

We exhibit a connection between ascent sequences and set partitions,
which does not seem to have been studied before, although such a
connection, of a very different nature, is considered in
\cite{dukes-parviainen}.  A standard way to represent set partitions
of the set $\{1,2,\ldots n\}$, or any other ordered set, is to write
the elements of each block in increasing order, and the blocks in
order of increasing minima, such as in 124-36-5, which represents the
partition of $\{1,2,3,4,5,6\}$ into blocks $\{1,2,4\}$, $\{3,6\}$ and
$\{5\}$.  This standard representation of a partition of $\{1,2,\ldots
n\}$ can be encoded by the string $a_1a_2\ldots a_n$, where $a_i=k$ if
$i$ belongs to block number $k$, counting from left to right, with the
leftmost block numbered 0.  In our previous example of $124\dd36\dd5$
this string would be 001021.  It is easy to see that a string $\bx$ of
nonnegative integers encodes a set partition if and only if the first
occurrence of each letter $k>0$ in $\bx$ is preceded by some
occurrence of $k-1$.  Such a string is called a \emph{restricted
  growth function}, or RGF.  A \emph{non-crossing partition} is a
partition that does not have letters $a<b<c<d$ with $a,c$ in one block
and $b,d$ in another.

If $p$ is a pattern, we let $\setpn$ be the set of ascent sequences of
length $n$ that avoid $p$ and $\numpn$ the number of such sequences.
Also, $\setp$ is the union of $\setpn$ for all $n\ge1$.  The set of
\emph{left-to-right maxima} in a sequence of numbers $a_1a_2\ldots
a_n$ is the set of $a_i$ such that $a_i>a_j$ for all $j<i$.  The set
of \emph{right-to-left minima} is the set of $a_i$ such that $a_i<a_j$
for all $j>i$. \emph{Left-to-right minima} and \emph{right-to-left
  maxima} are defined analogously.  Also, we define $\lrmax(\bx)$ to
be the number of left-to-right maxima in a sequence $\bx$, and
$\lrmin$, $\rlmax$, $\rlmin$ analogously.

Following is an overview of our main results, which appear in
Section~\ref{sec-results}.  See also Table~\ref{table-numbers} at the
end of the paper, where we list the counting sequences connected to
our results and conjectures.
\begin{itemize}
\item If $p$ is any one of the patterns $10, 001, 010, 011, 012$ then
  $\numpn=2^{n-1}$.

\item If $p$ is any one of the patterns $101, 0101,021$ then $\numpn$
  is the $n$-th Catalan number, and the ascent distribution on
  $\setpn$ is given by the Narayana numbers.  Moreover, the
  distribution of the bistatistic counting ascents and right-to-left
  minima is the same on $\seta_{021}(n)$ as it is on permutations of
  length $n$ avoiding~$132$.

\item If $p$ is any one of the patterns $102,0102,0112$ then
  $\numpn=(3^n+1)/2$.

\item Ascent sequences avoiding $101$ (or, equivalently, $0101$) are
  precisely the RGFs of the non-crossing partitions.

\item The set $\setpn$ consists solely of RGFs if and only if $p$ is a
  subpattern of $01012$.
\end{itemize}

The Catalan numbers, which we will frequently refer to, are given by
$C_n=\frac{1}{n+1}\binom{2n}{n}$. The Narayana numbers are given by
$N(n,k)=\frac{1}{n}\ch n,k,\ch n,k-1,$ and they refine the Catalan
numbers in that $C_n=\sum_k{N(n,k)}$.  It is well known that the
Narayana numbers record the distribution of the number of ascents on
permutations avoiding any given one of the patterns 132, 213, 231 and
312.

In addition to the results mentioned above we conjecture, in
Section~\ref{sec-conj}, avoidance sequences for the patterns $210$,
$0123$ and $0021$, in terms of entries in the {\sc\small Online
  Encyclopedia of Integer Sequences} \cite{oeis}, and also that $0021$
and $1012$ are \emph{Wilf equivalent}, that is, have the same
avoidance sequence.  In particular, we conjecture that ascent
sequences avoiding 210 are equinumerous with partitions avoiding so
called 3-crossings (a 2-crossing is simply a crossing in the usual
sense).

Moreover, we mention what little we know about growth rates of the
counting sequences for pattern avoiding ascent sequences.  In all
cases we know, or conjecture, the growth is exponential, as has been
proved for permutations avoiding any classical
pattern~\cite{marcus-tardos}.  Finally, we conjecture that
\emph{modified ascent sequences} (defined in \cite{bcdk}) avoiding 101
are in bijection with set partitions, and that there is a bijection
taking the number of non-ascents in such a modified sequence to the
number of blocks in the corresponding partition.  Note that this
conjecture would imply that modified ascent sequences avoiding 101
have super-exponential growth, that is, the number of such sequences
of length $n$ is not bounded by $C^n$ for any constant $C$.

\section{The results}\label{sec-results}

We first dispose of the easy results concerning patterns of length
less than three. There is, of course, no (nonempty) ascent sequence
avoiding the only pattern of length one.  There are three patterns of
length two, namely 00, 01 and 10.  It is easy to see that the only
ascent sequences avoiding 00 are the strictly increasing sequences
$0123\ldots$, one for each length $n$.  There is also precisely one
sequence of each length $n$ avoiding 01, namely the all zero sequence
$00\ldots0$.

The ascent sequences avoiding 10 are precisely the weakly increasing
sequences.  Since, in a weakly increasing ascent sequence, each
increase must be to a letter exceeding the previous maximum by one,
these sequences are determined by the places of their ascents.  Those
places can be chosen arbitrarily among the $n-1$ slots between
successive letters in a sequence of length $n$, so the number of such
ascent sequences is $2^{n-1}$ and the number of such sequences with
exactly $k$ ascents is $\binom{n-1}{k}$.  The same is true of three of
the sequences of length 3 as we now show.

\begin{thm}\label{thm-001}
  If $p$ is any one of the patterns 10, 001, 010 or 011, then
  $\numpn=2^{n-1}$.  Moreover, the number of sequences in $\setpn$
  with~$k$ ascents is $\binom{n-1}{k}$.
\end{thm}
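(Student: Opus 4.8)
The plan is to prove the refined statement---that $\seta_p(n)$ contains exactly $\binom{n-1}{k}$ sequences with $k$ ascents---separately for each of the three patterns $001,010,011$; summing over $k$ then gives $\numpn=2^{n-1}$, and the case $p=10$ has already been disposed of above. In each case I would first pin down the structure that avoidance of the length-$3$ pattern forces, and then read off the count, typically by encoding an avoider through a subset of the $n-1$ internal gaps.

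For $p=010$ I would show that $\seta_{010}(n)$ is nothing but the set of weakly increasing ascent sequences, i.e.\ $\seta_{10}(n)$. An occurrence of $010$ requires a strict descent from its middle letter to its last, so a weakly increasing sequence cannot contain $010$. For the converse I would induct on the length: by the inductive hypothesis the prefix is weakly increasing, hence contains every value between $0$ and its current maximum, so appending a letter strictly below the current value would reproduce that value after a strictly larger one and create an occurrence of $010$. Thus no strict descent is possible, the sequence is weakly increasing, and the refined count is inherited verbatim from the $10$ discussion.

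For $p=011$ I would first note that avoiding $011$ is equivalent to the condition that every \emph{positive} value occurs at most once: a repeated positive value, together with the initial letter $x_1=0$ to its left, is an occurrence of $011$, and conversely if the positive values are distinct no equal pair can be preceded by a strictly smaller letter. I would then show such a sequence is determined by its set of ascent positions. A short induction shows that the positive values used so far always form an initial segment $\{1,\dots,M\}$, so at a non-ascent the only admissible letter is $0$ (any positive choice would repeat), and at an ascent the only admissible letter is $M+1$, the growth bound $x_{i+1}\le\asc(x_1\cdots x_i)+1$ being met with equality. Every subset of $\{1,\dots,n-1\}$ arises, giving an ascent-preserving bijection with subsets and hence $\binom{n-1}{k}$.

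The pattern $001$ is where I expect the real content to lie. Avoidance of $001$ is the global condition that once a value has occurred twice, no later letter may exceed it. The key structural claim I would establish is that no value can be repeated before the maximum $M$ is first attained, so the prefix up to that first attainment is strictly increasing and therefore the ladder $0\,1\,2\cdots M$; moreover every subsequent letter lies in $\{0,\dots,M\}$ and hence repeats an already-seen value, which forces the entire suffix to be weakly decreasing. Thus a $001$-avoider is exactly a ladder $0\,1\cdots M$ followed by a weakly decreasing word on $\{0,\dots,M\}$; since all ascents sit in the ladder we have $M=k$, and counting weakly decreasing words of length $n-k-1$ over a $(k+1)$-letter alphabet gives $\binom{n-1}{k}$. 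The main obstacle is precisely this decomposition---in particular proving that the suffix is forced to be weakly decreasing, and checking that every such ladder-plus-tail is a legitimate ascent sequence avoiding $001$---which is what makes $001$ more delicate than the other two patterns.
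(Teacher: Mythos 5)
Your proposal is correct and follows essentially the same route as the paper: the paper's proof consists precisely of the three structural characterizations you derive (001-avoiders are a ladder $012\ldots k$ followed by a weakly decreasing word in $\{0,\ldots,k\}$; 010-avoiders are the weakly increasing sequences; 011-avoiders are a strictly increasing ladder interspersed with 0's), together with the same count $\binom{a+b}{a}$ of weakly monotone words. The only difference is that the paper states these characterizations and calls the verification easy, whereas you supply the verification arguments, all of which are sound.
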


\begin{proof}
We have already proved the claim for the pattern 10.  A proof for each
of the remaining three cases is easy to construct, given the following
characterizations of sequences avoiding each one of the patterns in
question:

\begin{itemize}
\item Ascent sequences avoiding 001 are precisely those that start
  with a strictly increasing sequence---necessarily $012\ldots k$ for
  some $k$---followed by an arbitrary weakly decreasing sequence of
  letters smaller than or equal to $k$.  An example is 01234444211.

\item Ascent sequences avoiding 010 are precisely those that are
  weakly increasing and thus of the form $00\ldots011\ldots122\ldots
  kk\ldots k$ for some $k$.

\item Ascent sequences avoiding 011 are precisely those that consist
  of a strictly increasing ascent sequence arbitrarily interspersed
  with 0's, for example 000100230400500.
\end{itemize}
In the first two cases we use the fact that the number of weakly
increasing (or weakly decreasing) sequences of nonnegative integers of
length $a$, and not exceeding $b$ in value, is $\binom{a+b}{a}$.
\end{proof}

It turns out that $\numa_{012}(n)$ is also $2^{n-1}$, but the ascent
distribution of sequences avoiding $012$ is different from the cases
in Theorem~\ref{thm-001}.

\begin{thm}\label{thm-012}
  We have $\numa_{012}(n)=2^{n-1}$.  The number of sequences in
  $\seta_{012}(n)$ with $k$ ascents is $\binom{n}{2k}$.
\end{thm}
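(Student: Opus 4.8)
The plan is to reduce the whole statement to counting binary words, via the structural claim that \emph{an ascent sequence avoids $012$ if and only if it is a word over $\{0,1\}$ beginning with $0$}. One direction is immediate: a word using only the letters $0$ and $1$ has no strictly increasing subsequence of length three, and any such word beginning with $0$ automatically satisfies $x_i\le\asc(x_1\cdots x_{i-1})+1$, since $x_i\le 1$ while the right-hand side is always at least $1$.

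The substantive direction, which I expect to be the main obstacle, is that a $012$-avoiding ascent sequence can contain no letter $\ge 2$. First I would let $c$ be the first position with $x_c\ge 2$, so that every earlier letter lies in $\{0,1\}$. From $x_c\ge 2$ and $x_c\le\asc(x_1\cdots x_{c-1})+1$ I get $\asc(x_1\cdots x_{c-1})\ge 1$, so there is at least one ascent among the first $c-1$ letters; the upper endpoint of such an ascent is a letter that is at least $1$, hence---by the minimality of $c$---exactly $1$, occurring at some position $b$ with $2\le b\le c-1$. Then $x_1x_bx_c=0\,1\,x_c$ with $x_c\ge 2$ is an occurrence of $012$, the desired contradiction. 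The one point requiring care is producing a letter $1$ strictly after position $1$ and strictly before $c$; choosing $c$ minimal is exactly what forces the top of the ascent down to $1$ rather than to some larger value.

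With the characterization in hand, it remains to count binary words of length $n$ that begin with $0$ and have exactly $k$ ascents. Since an ascent here is precisely a $0\to 1$ transition, such a word has exactly $k$ maximal blocks of $1$'s and is uniquely of the form $0^{a_0}1^{b_1}0^{a_1}\cdots 1^{b_k}0^{a_k}$, with $a_0\ge 1$, with $b_1,\ldots,b_k\ge 1$ and $a_1,\ldots,a_{k-1}\ge 1$, and with $a_k\ge 0$, the exponents summing to $n$. Subtracting $1$ from each of the $2k$ exponents constrained to be positive converts this into the number of nonnegative integer solutions of a single equation in $2k+1$ unknowns with total $n-2k$, which by stars and bars equals $\binom{n}{2k}$; this is the asserted ascent distribution. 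Summing over $k$ then gives $\numa_{012}(n)=\sum_{k\ge 0}\binom{n}{2k}=2^{n-1}$, in agreement with the direct count of binary words of length $n$ beginning with $0$.
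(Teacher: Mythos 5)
Your proof is correct and takes essentially the same approach as the paper: both arguments characterize the $012$-avoiding ascent sequences as exactly the binary words beginning with $0$ (the paper's justification for the hard direction is a terser version of your minimal-counterexample argument that a first letter $\ge 2$ forces a preceding $0,1$ pair), and then count such words according to their ascents. The only difference is bookkeeping in the second step: the paper records the positions where runs end as a $2k$-element subset of $\{1,\ldots,n\}$, giving $\binom{n}{2k}$ via an explicit bijection, whereas you count the run-length compositions $0^{a_0}1^{b_1}\cdots 1^{b_k}0^{a_k}$ by stars and bars --- an equivalent computation.
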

\begin{proof}
Ascent sequences avoiding 012 are those that have no increasing
subsequence of length 3.  If an ascent sequence avoids 012, then,
after the initial 0, there can be no letters other than 0 and 1, since
any letter $a$ larger than 1 must be preceded by 1, in which case we
have a subsequence 01a, forming a 012.  Since the initial 0 can be
followed by any sequence of 0's and 1s, the sequences avoiding 012 are
precisely those that consist of an arbitrary binary string after the
initial 0, of which there are $2^{n-1}$.

To prove the result about the ascent distribution, suppose we are
given a $2k$-element subset $S=\{x_1<x_2<\cdots<x_{2k}\}$ of the set
$\{1,2,\ldots,n\}$, and we will use this to construct a binary string
of length $n$, starting with 0, with $k$ ascents.  The string to be
constructed will consist of 0's in all places preceding and including
place $x_1$, then the letters in places $x_{1}+1$ up to and including
$x_2$ will be 1s.  Generally, the letters in places $x_{2i}+1$ up to
and including $x_{2i+1}$, or after place $x_{2k}$, will be 0's and
those in places $x_{2i+1}+1$ up to and including $x_{2i+2}$ will be
1s.  The sequence thus constructed will clearly have ascents precisely
in the~$k$ places $x_{2i+1}$, where $i$ ranges from 0 to $k-1$.  It is
straightforward to construct the set $S$ of size $2k$ from a binary
string with $k$ ascents, which shows this is a bijection.
\end{proof}

Recall that two patterns $p$ and $q$ are Wilf equivalent if
$\numpn=\numa_q(n)$ for all $n$.  Theorems \ref{thm-001} and
\ref{thm-012} lead to the following result.

\begin{coro}\label{coro-10-wilf}
The patterns 10, 001, 010, 011 and 012 are Wilf equivalent.
\end{coro}

The following lemma is interesting for its own sake, as it
characterizes those patterns whose avoidance by a sequence $\bx$
guarantees that $\bx$ is an RGF, that is, a restricted growth function
that encodes a set partition.  Its second part also turns out to be
useful in proving some of our other results.

\begin{lemma}\label{lemma-rgf}
  Let $p$ be a pattern.  The $\setpn$ consists solely of RGF sequences
  if and only if~$p$ is a subpattern of 01012.  In particular, in a
  sequence $\bx$ avoiding any of these patterns, every occurrence of
  each letter $k\ge1$ is preceded by some occurrence of each of the
  letters $0,1,\ldots,k-1$.

\end{lemma}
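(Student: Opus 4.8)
The plan is to prove the biconditional by treating the two implications separately, and to note at the outset that the ``in particular'' clause is automatic: once we know that avoiding a subpattern of 01012 forces $\bx$ to be an RGF, the defining property of an RGF (the first occurrence of $k$ is preceded by some $k-1$) iterates to show that \emph{before} the first occurrence of $k$ all of $0,1,\ldots,k-1$ have appeared, whence every occurrence of $k$ is so preceded. A preliminary observation used in both directions is that if a sequence contains the pattern 01012, then it contains \emph{every} subpattern of 01012: composing the embedding of 01012 into $\bx$ with an occurrence of a subpattern $q$ inside 01012 yields an occurrence of $q$ in $\bx$.

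For the contrapositive of the ``only if'' direction (if $p$ is not a subpattern of 01012 then \setp{} contains a non-RGF) I would exhibit a single witness. The sequence 01013 is an ascent sequence, the crucial check being $3\le\asc(0101)+1=3$; it is \emph{not} an RGF, since the letter 3 occurs with no preceding 2; and it standardizes to 01012, so the patterns occurring in it are exactly the subpatterns of 01012. Hence if $p$ is not a subpattern of 01012, then 01013 avoids $p$, placing a non-RGF ascent sequence in \setp.

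For the ``if'' direction it suffices, by the preliminary observation, to prove that every non-RGF ascent sequence contains 01012. I would first reformulate the RGF property: $\bx$ is an RGF exactly when $x_i\le\max(x_1\ldots x_{i-1})+1$ for all $i$, so a non-RGF ascent sequence has a first ``jump'' position $i$ with $x_i\ge M+2$, where $M=\max(x_1\ldots x_{i-1})$ and the prefix $x_1\ldots x_{i-1}$ is itself an RGF, attaining every value $0,1,\ldots,M$. The ascent-sequence inequality then gives $\asc(x_1\ldots x_{i-1})\ge x_i-1\ge M+1$. Since the running maximum of the RGF prefix climbs from $0$ to $M$ in unit steps, exactly $M$ of its ascents are ``record'' ascents (those setting a new maximum); as there are at least $M+1$ ascents in all, at least one ascent $x_j<x_{j+1}=:b$ is not a record, meaning $b$ has already occurred in $x_1\ldots x_j$.

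The heart of the argument, and the step I expect to be the main obstacle, is assembling an occurrence of 0101 in the prefix from this non-record ascent, using the RGF structure twice. Writing $a=x_j<b$, let $p$ be the first occurrence of $b$; the non-record condition forces $p<j$, while the RGF property guarantees that all of $0,\ldots,b-1$ --- in particular the value $a$ --- appear before position $p$. Choosing such an occurrence of $a$ at some $q<p$ yields positions $q<p<j<j+1$ carrying values $a,b,a,b$ with $a<b\le M$, an occurrence of 0101. Appending the jump letter $x_i\ge M+2>b$ turns this into an occurrence of 01012, completing the proof that a non-RGF ascent sequence contains 01012, and with it the lemma.
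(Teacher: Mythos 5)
Your proof is correct and takes essentially the same route as the paper's: both directions match, with 01013 serving as the witness for necessity, and sufficiency proved by showing a non-RGF ascent sequence must contain 01012. Your ``non-record ascent'' is exactly the paper's observation that the prefix has more ascents than those provided by first occurrences of each letter, and both proofs then use the RGF structure of the prefix to assemble the values $a,b,a,b$ before the violating large letter, yielding the occurrence of 01012.
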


\begin{proof}
Clearly the sequence 01013, not being RGF, must be excluded from any
set of RGF sequences.  This is guaranteed by the avoidance of $p$ only
if $p$ is a pattern in 01013, which is equivalent to being a
subpattern of 01012.  Thus, no other patterns than subpatterns of
01012 have avoidance sets consisting solely of RGFs.  We show that the
sequences avoiding any one of these patterns are RGFs, thereby
establishing the claim.  We prove the contrapositive, showing that if
an ascent sequence is not an RGF it must contain 10102.  That is
sufficient, since a sequence containing 10102 of course contains all
its subpatterns.

Suppose then that $\bx$ is an ascent sequence that is not an RGF.  Then
there must be a leftmost letter in $\bx$ that violates this.  If that
letter is $k$ then it is not preceded by $k-1$, but for some $i\ge2$
all the letters $0,1,\ldots,k-i$ appear in $\bx$ preceding $k$, and
their first appearances are in increasing order.  In order for an
ascent sequence to contain $k$ but not be preceded by $k-1$, it must
contain more ascents than those provided by the first occurrences of
each of $1,2,\ldots,k-i$.  Thus, one of the letters $1,2,\ldots,k-i$
appears at least twice as the rightmost letter in an ascent.  Suppose
this letter is $c$ and that the second one of these ascents is of the
form $\ldots ac\ldots$.  Thus, the first occurrence of $c$ must
precede this occurrence.  But, the first occurrence of $c$ is preceded
by the first occurrence of $a$.  Thus, we have a subsequence $\ldots
a\ldots c\ldots ac\ldots k\ldots$, and $acack$ is an occurrence of
$01012$.

The second part of the lemma follows directly from the definition of
RGFs.
\end{proof}

It turns out, as we will now show, that the ascent sequences avoiding
101 are the same as those avoiding 0101.  Moreover we will show that
these ascent sequences, which are RGFs according to
Lemma~\ref{lemma-rgf}, are precisely those that encode non-crossing
partitions.

\begin{thm}\label{thm-101}
  The ascent sequences avoiding $101$ are the same as those avoiding
  0101, and $\numa_{101}(n)=\numa_{0101}(n)=C_n$, the $n$-th Catalan
  number.  Moreover, the distribution of the number of ascents on
  these sequences is given by the Narayana numbers.
\end{thm}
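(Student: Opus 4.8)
The plan is to translate the entire statement into the language of restricted growth functions and to run everything off Lemma~\ref{lemma-rgf}. Since both $101$ and $0101$ are subpatterns of $01012$, any ascent sequence avoiding either of them is an RGF by that lemma; moreover its second part guarantees that in such a sequence every letter $k\ge 1$ is preceded by occurrences of all of $0,1,\dots,k-1$. This ``preceded-by'' property is the engine for everything that follows.

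First I would settle the equivalence $\seta_{101}=\seta_{0101}$. One inclusion is immediate: an occurrence of $0101$ contains its last three letters as an occurrence of $101$, so avoiding $101$ forces avoiding $0101$. For the reverse, suppose $\bx$ avoids $0101$; then $\bx$ is an RGF, and I claim it avoids $101$. Indeed, if $\bx$ had an occurrence of $101$, say letters $b,a,b$ with $a<b$ in positions $i<j<l$, then since $b\ge 1$ the RGF property places an occurrence of the value $a$ before the first occurrence of $b$, hence strictly before position $i$; that earlier $a$ together with the three letters $b,a,b$ spells $0101$, a contradiction. Thus $\seta_{0101}\subseteq\seta_{101}$ and the two sets coincide.

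Next I would identify $\seta_{101}(n)$ with the non-crossing partitions. A crossing in an RGF is exactly a subsequence $c,d,c,d$ with $c\ne d$, that is, an occurrence of $0101$ (if $c<d$) or $1010$ (if $c>d$); both of these contain $101$, so every $101$-avoiding RGF is non-crossing, while the argument above shows conversely that a non-crossing RGF cannot contain $101$. It then remains only to check that every non-crossing RGF actually lies in $\seta_{101}(n)$, i.e.\ is itself an ascent sequence; this holds because every RGF is an ascent sequence: each increase of the running maximum of an RGF occurs at an ascent, so the number of ascents in any prefix is at least its largest letter, and the RGF growth bound $a_i\le 1+\max(a_1\dots a_{i-1})$ therefore implies the ascent bound $a_i\le 1+\asc(a_1\dots a_{i-1})$. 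Hence $\seta_{101}(n)$ is precisely the set of RGFs of non-crossing partitions of $\{1,\dots,n\}$, and since RGFs biject with set partitions this yields $\numa_{101}(n)=\numa_{0101}(n)=C_n$.

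Finally, for the Narayana refinement I would relate ascents to block counts. In a $101$-avoiding ascent sequence every ascent must introduce a new maximum: if $a_j<a_{j+1}$ were an ascent with the value $a_{j+1}$ occurring earlier, that earlier occurrence, the letter $a_j$, and $a_{j+1}$ would form a $101$. Consequently the number of ascents equals the maximum letter, which is one less than the number of blocks, so the sequences in $\seta_{101}(n)$ with $k$ ascents correspond to non-crossing partitions with $k+1$ blocks, counted by $N(n,k+1)$; summing over $k$ recovers $C_n$. The technical heart, and the step I expect to be the main obstacle, is the single RGF argument used twice, namely that a $101$ in an RGF always extends backwards to a $0101$: isolating this makes both the pattern equivalence and the non-crossing characterization fall out. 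The only other point needing care is the easily-overlooked fact that every RGF is genuinely an ascent sequence, which is exactly what upgrades the correspondence with non-crossing partitions from an injection to a bijection.
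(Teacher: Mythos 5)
Your proof is correct, but it takes a genuinely different route from the paper's. The first step---showing that an occurrence of $101$ extends backwards to an occurrence of $0101$ via Lemma~\ref{lemma-rgf}---is the same in both arguments. After that the paper constructs an explicit ascent-preserving bijection from $\seta_{101}(n)$ to $312$-avoiding permutations (replacing the $0$'s of the sequence, left to right, by the largest values in decreasing order, then the $1$'s by the next block of values, and so on), and imports the Catalan count and Narayana ascent distribution from the known facts about $312$-avoiding permutations quoted in the introduction. You instead identify $\seta_{101}(n)$ with the RGFs of non-crossing partitions, show that ascents correspond to blocks minus one, and import both enumerative conclusions from Kreweras' classical result that non-crossing partitions of $\{1,\dots,n\}$ with $k$ blocks are counted by $N(n,k)$. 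Your route buys two things: it proves, as a byproduct, the paper's very next theorem (the non-crossing-partition characterization of $\seta_{0101}$, which the paper establishes separately), and it makes explicit a point the paper leaves implicit there, namely that every RGF is itself an ascent sequence (your running-maximum argument), which is exactly what makes the correspondence surjective. The cost is the reliance on the Kreweras enumeration, a classical fact but one the paper never states, whereas the permutation facts the paper leans on are stated in its introduction. One small step you should spell out: from ``the top of an ascent is a value not seen before'' to ``the top of an ascent is a new maximum'' you need that the values in any prefix of an RGF form an initial segment $\{0,1,\dots,m\}$, so that a new value is forced to be $m+1$; this follows immediately from the RGF definition, but as written your Narayana paragraph silently uses it.
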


\begin{proof}
  Clearly, an ascent sequence containing 0101 contains 101.  We show
  that the converse is also true, thereby showing that an ascent
  sequence avoids 101 if and only if it avoids 0101.  So, let $\ldots
  b\ldots a\ldots b\ldots$ be an occurrence of 101, so $a<b$.  By
  Lemma~\ref{lemma-rgf}, since 101 is a pattern in 01012, the first
  $b$ in the occurrence $\ldots b\ldots a\ldots b\ldots$ must be
  preceded by an occurrence of $a$.  This gives the subsequence
  $abab$, which is an occurrence of 0101.

We now exhibit a bijection from 101-avoiding ascent sequences of
length $n$ to 312-avoiding permutations of length $n$.  Given a
101-avoiding ascent sequence $\bx$, replace its 0's from left to right
with the numbers $k,(k-1),\ldots,2,1$, in decreasing order, where $k$
is the number of 0's in $\bx$.  Now repeat this for the 1's in $\bx$,
with the numbers $(\ell+k),(\ell+k-1),\ldots,(k+1)$, where $\ell$ is
the number of 1's in $\bx$ and so on.  For example, the sequence
01023200 is mapped to 45378621.  Suppose the resulting permutation $\pi$
contains an occurrence of 312, say in a subsequence $zxy$, where
$x<y<z$.  Then this subsequence in $\pi$ would correspond to a
subsequence in $\bx$ of the form $bab$ or $cab$, where $a<b<c$,
depending on whether $z=y+1$ or $z>y+1$.  In the first case $bab$ is
an occurrence of 101 in $\bx$.  In the second case the $c$ in $cab$
must be preceded by $b$, by Lemma~\ref{lemma-rgf}, again giving an
occurrence of 101 in $\bx$, a contradiction showing that this map
produces a 312-avoiding permutation.

This map is easily seen to be invertible.  Namely, given a permutation
$\pi=a_1a_2\ldots a_n$ avoiding 312, note that if $a_1=k$, then the
letters $k,k-1,\ldots,1$ must appear in this decreasing order in
$\pi$, for it to avoid 312.  Thus, the places of these letters are
filled with 0's in the ascent sequence $\bx$ corresponding to
$\pi$. Iterating this process we next find the leftmost letter $a_i$
in $\pi$ that is larger than $k$ and fill the places in $\bx$
corresponding to the places of the letters $a_i,a_i-1,\ldots,k+1$ in
$\pi$ by 1's, and so on.  It is straightforward to verify that the
sequence $\bx$ thus constructed avoids 101.

It is also easy to verify that this map preserves ascents.  Thus, the
101-avoiding ascent sequences are enumerated by the Catalan numbers,
and have ascent distribution given by the Narayana numbers, as is the
case for 312-avoiding permutations.
\end{proof}

\begin{thm}
The ascent sequences avoiding 0101 (equivalently, 101) are RGFs, and
as such they encode precisely all non-crossing partitions.
\end{thm}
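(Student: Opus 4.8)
The plan is to prove the statement by establishing the equivalence between the combinatorial structure of 0101-avoidance (in the RGF encoding) and the non-crossing condition on the underlying partition. By the earlier results, we already know that $\seta_{0101}=\seta_{101}$ and that every such sequence is an RGF (Lemma~\ref{lemma-rgf}, since $101$ is a subpattern of $01012$). So the real content is showing that an RGF $\bx=a_1a_2\ldots a_n$ encodes a non-crossing partition if and only if $\bx$ avoids $0101$. This should split cleanly into two implications, both proved by translating between the pattern-occurrence language and the block-crossing language.

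First I would unwind the definitions to find the precise correspondence. Recall a crossing in a partition consists of positions $i<j<k<\ell$ with $i,k$ in one block $B$ and $j,\ell$ in another block $B'$, where $B\ne B'$. In the RGF string $\bx$, ``$i$ and $k$ lie in the same block'' means $a_i=a_k$, and likewise $a_j=a_\ell$ for the other block; distinctness of the blocks means $a_i\ne a_j$. So a crossing corresponds exactly to four positions $i<j<k<\ell$ with $a_i=a_k$, $a_j=a_\ell$, and $a_i\ne a_j$. This is precisely an occurrence of the pattern $0101$ (if $a_i<a_j$) or of $1010$ (if $a_i>a_j$). The key observation I would make explicit is that in an RGF the block containing position $1$ is always labelled $0$, and more usefully, that the relative order of the two repeated values determines which of $0101$ or $1010$ appears---so I must handle the possibility that the crossing appears as a $1010$ rather than a $0101$.

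The main step, then, is to argue that for RGFs the patterns $0101$ and $1010$ are interchangeable as far as occurrences go: a crossing always produces an occurrence of $0101$ itself, not merely one of the two orderings. Here I would invoke the non-crossing structure together with Lemma~\ref{lemma-rgf}. Suppose a crossing is witnessed by values $b>a$ appearing as $b\ldots a\ldots b\ldots a$ (a $1010$). The leftmost $b$ is a later-introduced block, so by the RGF/Lemma~\ref{lemma-rgf} property the first occurrence of $b$ is preceded by an occurrence of $a$; inserting that earlier $a$ in front yields a subsequence $a\,b\,a\,b\ldots$, hence an honest occurrence of $0101$. Thus any crossing forces a $0101$, and conversely any $0101$ occurrence $a\ldots b\ldots a\ldots b$ with $a<b$ gives four positions exhibiting a crossing directly from the definition above. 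I expect this symmetrization---showing that one need only track $0101$ and not $1010$---to be the main obstacle, since it is where the RGF hypothesis and Lemma~\ref{lemma-rgf} must be used rather than a purely formal unwinding of definitions.

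Having established the biconditional ``RGF $\bx$ has a crossing $\iff$ $\bx$ contains $0101$,'' the theorem follows immediately: the $0101$-avoiding ascent sequences are exactly the RGFs with no crossing, i.e.\ exactly the non-crossing partitions. To complete the ``precisely all'' claim I would finally note that every non-crossing partition arises this way, since its standard RGF encoding is an RGF avoiding $0101$ by the same correspondence, and is an ascent sequence because $\seta_{0101}$ consists of ascent sequences; a count consistency check against $\numa_{0101}(n)=C_n$ from Theorem~\ref{thm-101}, matched with the well-known Catalan enumeration of non-crossing partitions, provides a reassuring sanity check but is not logically needed once the bijection on structures is in hand.
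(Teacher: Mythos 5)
Your core argument is correct and is essentially the paper's: 0101-avoiders are RGFs by Lemma~\ref{lemma-rgf}, a 0101 occurrence manifestly yields a crossing, and a crossing yields a 0101 occurrence after an orientation fix. The only real difference is how that fix is done. The paper extracts from the four crossing positions a three-term subsequence forming 101 (one of the triples $a<b<c$ or $b<c<d$ always does) and then cites the equivalence of 101- and 0101-containment already proved in Theorem~\ref{thm-101}; you instead keep all four positions, observe the pattern is 0101 or 1010, and in the 1010 case prepend an earlier occurrence of the smaller letter, guaranteed by the RGF property, before the first occurrence of the larger one. These are the same idea---the prepending trick is exactly how Theorem~\ref{thm-101} establishes its equivalence---so your version is just slightly more self-contained.

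One point needs fixing: your justification of the ``precisely all'' direction is circular. You say the RGF of a non-crossing partition ``is an ascent sequence because $\seta_{0101}$ consists of ascent sequences,'' but membership in $\seta_{0101}$ presupposes being an ascent sequence, so it cannot be used to establish that property. What you actually need is the (true, but not free) fact that every RGF is an ascent sequence: in an RGF every letter preceding the first occurrence of a value $k$ is at most $k-1$, so the first occurrences of $1,2,\ldots,k-1$ each end an ascent, giving $\asc\ge k-1$ before any occurrence of $k$, which is precisely the ascent-sequence condition. Alternatively, the Catalan count that you dismiss as ``not logically needed'' is in fact the cleanest way to close this gap: the encoding is injective from $\seta_{0101}(n)$ into the set of non-crossing partitions, and both sets have size $C_n$ (by Theorem~\ref{thm-101} and the classical enumeration of non-crossing partitions), hence the map is onto. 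The paper's own proof is silent on this point, so your attempt is no less complete---but the justification you wrote down, as stated, does not work.
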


\begin{proof}
Since 0101 is a pattern in 01012, the ascent sequences avoiding it are
RGFs, by Lemma~\ref{lemma-rgf}.

The occurrence of the pattern 0101 in an ascent sequence $\bx$ causes
a crossing, since it implies that we have letters $abab$, occurring in
that order, in $\bx$.  Thus, if the places where these four letters
occur are $x,y,z,w$, so that $x<y<z<w$, we have that $x$ and $z$ are
in the same block, and $y$ and $w$ in the same block, different from
the first one, which constitutes a crossing.

Conversely, suppose a set partition has a crossing consisting of
letters $a<b<c<d$, with $a$ and $c$ in the same block and $b$ and $d$
together in a different block.  Then either $a<b<c$ or $b<c<d$ gives
an occurrence of 101, which is equivalent to having an occurrence of
0101.
\end{proof}

In order to give a formula for the number of ascent sequences avoiding
one of the patterns in Theorem~\ref{thm-102-wilf} we need the
following lemma.

\begin{lemma}\label{lemma-tern}
  The number of ternary sequences of length $n$ on the letters $0,1,2$
  with an even number of 2's is $(3^n+1)/2$.
\end{lemma}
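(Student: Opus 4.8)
The plan is to compute the two quantities $E_n$ and $O_n$, the numbers of ternary sequences of length $n$ with, respectively, an even and an odd number of 2's, by exploiting the two identities they satisfy. Since every ternary sequence of length $n$ has either an even or an odd number of 2's, I immediately have $E_n + O_n = 3^n$. The result follows at once if I can also show that $E_n - O_n = 1$, for then $E_n = \frac{1}{2}\bigl((E_n+O_n) + (E_n-O_n)\bigr) = (3^n+1)/2$.

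To evaluate the difference $E_n - O_n$, I would write it as the signed sum $\sum_{\mathbf{s}} (-1)^{t(\mathbf{s})}$, where $\mathbf{s}$ ranges over all $3^n$ ternary sequences of length $n$ and $t(\mathbf{s})$ is the number of 2's in $\mathbf{s}$. This weight is multiplicative across positions: in each position a 2 contributes a factor $-1$, while each of 0 and 1 contributes a factor $+1$. Hence the sum factors as a product of $n$ identical factors, each equal to $1 + 1 - 1 = 1$, giving $E_n - O_n = 1^n = 1$, as required.

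Equivalently, and perhaps more transparently, one can package the count in the generating polynomial $(2 + y)^n$, in which the variable $y$ marks occurrences of the letter 2 and the summand $2$ accounts for the two letters 0 and 1. Extracting the even part by the standard device of averaging the specializations $y = 1$ and $y = -1$ yields $E_n = \frac{1}{2}\bigl((2+1)^n + (2-1)^n\bigr) = (3^n+1)/2$ directly.

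There is essentially no obstacle here, which is why this is isolated as a lemma rather than proved inline: the only point requiring any care is the observation that the signed count factors over the $n$ positions, and this is immediate from the definition of $t(\mathbf{s})$ as a sum of per-position indicators. The two routes above are simply two encodings of the same parity-extraction argument, and either one suffices.
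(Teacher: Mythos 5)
Your proposal is correct and follows essentially the same route as the paper: both arguments combine the trivial identity $E_n + O_n = 3^n$ with the signed count $E_n - O_n = 1$, which the paper evaluates by grouping sequences according to their number of 2's and applying the binomial theorem, $\sum_k \binom{n}{k}2^{n-k}(-1)^k = (2-1)^n = 1$ --- exactly the algebraic form of your position-by-position factorization (and of your $(2+y)^n$ parity extraction).
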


\begin{proof}
  It's easy to see that the number of ternary sequences of length $n$
  with exactly $k$ 2's is ${\binom{n}{k}2^{n-k}}$.  Thus, the
  difference between the number of such sequences with an even number
  of 2's and those with an odd number is given by
\[
(-1)^n\sum_{k}{\binom{n}{k}(-2)^{n-k}} = (-1)^n(1-2)^n = 1.
\] 
Since the total number of ternary sequences of length $n$ is $3^n$,
the claim follows.
\end{proof}

\begin{thm}\label{thm-102}
  We have $\numa_{102}(n)=(3^{n-1}+1)/2$.
\end{thm}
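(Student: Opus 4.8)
The plan is to reduce to a structural description of the $102$-avoiders and then biject them with the ternary strings counted by Lemma~\ref{lemma-tern}.

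First I would note that $102$ is a subpattern of $01012$ (read the subsequence $1,0,2$ out of $01012$), so by Lemma~\ref{lemma-rgf} every sequence in $\seta_{102}(n)$ is an RGF; in particular each letter $k\ge1$ is preceded by occurrences of all of $0,1,\dots,k-1$. Using this I would prove the following cap characterization: $\bx$ avoids $102$ if and only if $x_k\le 1+\min\{x_j: j<k \text{ and } x_j<\max(x_1,\dots,x_{j-1})\}$ for every $k$, the bound being vacuous when no earlier $x_j$ lies below the running maximum. Indeed, an occurrence of $102$ is a triple $i<j<k$ with $x_j<x_i<x_k$; since $\bx$ is an RGF, the smallest earlier letter exceeding a given ``dip'' value $x_j$ is exactly $x_j+1$, so forbidding such triples is precisely the displayed bound. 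Consequently a $102$-avoider is weakly increasing up to the first strict descent below its running maximum, say $0^{a_0}1^{a_1}\cdots r^{a_r}$ with each $a_i\ge1$, after which every letter is at most one more than the minimum of the letters already produced in this tail.

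Next I would construct a map $\Phi\colon\seta_{102}(n)\to\{0,1,2\}^{n-1}$ whose image is exactly the strings with an even number of $2$'s, and then invoke Lemma~\ref{lemma-tern}. On the weakly increasing sequences (the sequences with no descent, equivalently the $10$-avoiders) $\Phi$ should record, in each of the $n-1$ gaps, whether that step is an ascent or a repetition; this reproduces the ascent-set encoding of Theorem~\ref{thm-001} and already accounts for all $2^{n-1}$ strings with no $2$. The dipping sequences must then be carried bijectively onto the strings with a positive even number of $2$'s, the $2$'s encoding the descent structure of the tail.

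The main obstacle is the tail encoding. Since the number of admissible values for a tail letter equals $2+(\text{current tail minimum})$ and therefore varies with position, $\Phi$ cannot be a letter-by-letter relabeling: any purely local rule collapses distinct sequences (for instance $0120$ and $0121$ agree in every step-type), so the variable branching of the tail has to be threaded into the fixed ternary alphabet in a way that emits $2$'s in pairs. Verifying that such a $\Phi$ is a bijection onto the even-$2$ strings is the heart of the argument; once it is in place, Lemma~\ref{lemma-tern} gives $\numa_{102}(n)=(3^{n-1}+1)/2$ at once. As an independent check one can instead compute the generating function of the structure above and verify that it equals $z(1-2z)/\bigl((1-z)(1-3z)\bigr)$, whose coefficients are $(3^{n-1}+1)/2$.
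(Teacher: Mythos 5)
Your setup is sound, and it is essentially the paper's own: your ``cap'' characterization (after a dip, every letter is at most one more than the smallest dip so far) is equivalent to the paper's decomposition of a $102$-avoider into a weakly increasing prefix followed by ``lifted binary strings'' whose first letters strictly decrease, and your target --- ternary strings of length $n-1$ with an even number of $2$'s, counted by Lemma~\ref{lemma-tern} --- is exactly the paper's. But the proof stops precisely where the theorem lives: you never define $\Phi$ on the sequences containing a descent, and you say yourself that verifying such a $\Phi$ exists and is bijective ``is the heart of the argument.'' Identifying the obstacle (variable branching in the tail, $2$'s must be emitted in pairs) is not the same as overcoming it. The paper overcomes it with a concrete device: in the tail, write $2$ at the first letter of each lifted binary string and $0$ or $1$ at the remaining letters according as they equal the base $b$ or $b+1$; in the prefix, provisionally write $0$ for a repeat and $1$ for a rise; then, for each tail segment with base $b$, change to $2$ the prefix letter sitting at the leftmost occurrence of $b+1$ in $\bx$. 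Since the bases are distinct and all smaller than the prefix maximum, these prefix positions exist (by Lemma~\ref{lemma-rgf}, every value up to the maximum occurs, and its first occurrence is a rise in the prefix) and are distinct; each tail $2$ is thus paired with one prefix $2$, forcing evenness, and the map can be inverted because the prefix $2$'s record exactly the values $b+1$ lying above the bases of the successive tail segments. Without this pairing construction, or an equivalent one, your bijection claim is unsupported and the proof is incomplete.

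The generating-function fallback you mention would be a legitimate --- and genuinely different, enumerative rather than bijective --- way to finish, but you only assert the answer instead of deriving it. To actually close that route from your structure: a weakly increasing prefix of length $k$ with maximum $r$ can be chosen in $\binom{k-1}{r}$ ways, so prefixes with maximum $r$ contribute $z^{r+1}/(1-z)^{r+1}$; the $m$ bases form an $m$-subset of $\{0,\dots,r-1\}$, giving a factor $\binom{r}{m}$; and each lifted binary segment of length $i\ge1$ admits $2^{i-1}$ fillings, with generating function $z/(1-2z)$. Hence the count of $102$-avoiders has generating function
\[
\sum_{r\ge0}\frac{z^{r+1}}{(1-z)^{r+1}}\Bigl(1+\frac{z}{1-2z}\Bigr)^{r}
=\frac{z}{1-z}\sum_{r\ge0}\Bigl(\frac{z}{1-2z}\Bigr)^{r}
=\frac{z(1-2z)}{(1-z)(1-3z)},
\]
whose coefficient of $z^n$ is indeed $(3^{n-1}+1)/2$. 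Had you carried out this computation, you would have a complete proof needing no bijection at all (though you would lose the explicit correspondence the paper's map provides); as written, neither of your two routes is finished.
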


\begin{proof}
  We exhibit a bijection between sequences in $\seta_{102}(n)$ and
  ternary strings of length $n-1$ with an even number of 2's.  Together
  with Lemma~\ref{lemma-tern} this will prove the theorem.

  We will show that a 102-avoiding ascent sequence is necessarily
  composed of a weakly increasing ascent sequence of any length
  followed by a (possibly empty) sequence of what we call \emph{lifted
    binary strings}, namely, strings composed from two integers
  differing in size by one.  More precisely, 102-avoiding ascent
  sequences are those that can be written as
\[
x_1x_2\ldots x_k\; b_1^1b_2^1\ldots b_{i_1}^1\; b_1^2b_2^2\ldots
b_{i_2}^2\; \ldots b_1^mb_2^m\ldots b_{i_m}^m,
\]
where $x_1x_2\ldots x_k$ is a weakly increasing ascent sequence, the
$b_1^j$ form a strictly decreasing sequence with
$x_k>b_1^1>b_1^2>\cdots>b_1^m\ge0$ and each $b_p^q$ equals either
$b_1^q$ or $b_1^q+1$.  An example of such a sequence is
\[
0012234566 \; 56656 \; 4454 \; 2 \; 01001,
\]
where we have separated the respective lifted binary strings by
spaces, for clarity.

It is easy to see that a sequence of this type avoids 102.  We now
explain why any 102-avoiding sequence has this structure.

Any ascent sequence $\bx$ starts with a weakly increasing sequence
$x_1x_2\ldots x_k$ of length at least 1.  In what follows, we choose
$k$ such that this initial weakly increasing sequence is of maximal
length. By Lemma \ref{lemma-rgf}, since 102 is a pattern in 01012, all
the integers between 0 and the maximum letter in $\bx$ must appear in
$\bx$.

If there are no letters following this initial weakly increasing
subsequence of $\bx$ then $\bx$ clearly has the form described, with
an empty sequence of lifted binary strings after the initial weakly
increasing sequence. Otherwise there is a descent immediately
following the initial sequence $x_1x_2\ldots x_k$, that is, $x_k$ is
followed by $b_1^1$ where $x_k>b_1^1$.  If any letter $x$ after
$b_1^1$ satisfied $x>b_1^1+1$, then, for some~$i$, the subsequence
$x_i,b_1^1,x$ would form a 102.  This is because the letter
$(b_1^1+1)\le x_k$ must appear in the initial sequence $x_1x_2\ldots
x_k$.

Thus, if $x_k$ is not the last letter of $\bx$, then $x_k$ is followed
by a sequence $b_1^1b_2^1\ldots b_{i_1}^1$, each of whose letters is
either $b_1^1$ or $(b_1^1+1)$, and where $b_1^1<x_k$.  An analogous
argument now shows that $b_{i_1}^1$ must be followed by a sequence
$b_1^2b_2^2\ldots b_{i_2}^2$ of letters $b_1^2$ and $b_1^2+1$, where
$b_1^2 < b_1^1$, and so on.

We now encode a 102-avoiding ascent sequence with a ternary string
$\bt=t_2t_3\ldots t_n$, of length $n-1$ on the symbols 0, 1, 2, with
an even number of 2's.  Note that the numbering of the letters in
$\bt$ starts with 2.  Recall that $k$ is the index of the last copy of
the largest letter in $\bx$, that is, the last letter of the initial
weakly increasing sequence.  In what follows we refer to the prefix
$x_1x_2\ldots x_k$ of $\bx$ as its first part, and the remainder as
its second part, and likewise for the corresponding parts of $\bt$.
We let each of the letters $t_i$ in the first part of $\bt$ be a 0 if
$x_i=x_{i-1}$, and 1 otherwise, that is, if $x_{i}=x_{i-1}+1$.  Some
of the 1s in this first part of $\bt$ will later be changed to 2's.
We refer the reader to the example after this proof for clarification
of this and the rest of the proof.

The rest of $\bt$ is constructed from what remains of $\bx$, that is,
from the tail $x_{k+1}\ldots x_n$.  This is the part of $\bx$ that
consists of lifted binary strings with entries that decrease in size
between two consecutive such strings.  Here, we let $t_i$ be a 2 if
$x_i$ is the first letter $\ell$ in one of these lifted binary
strings, a 0 if $x_i=\ell$ but is not the first in the string, or a 1
if $x_i=\ell+1$ in the string.

We now change some of the 1s in the first part of $\bt$ to 2's as
follows: Suppose the leftmost 2 in (the second part of) $\bt$ is
$t_i$.  Then the integer $x_i+1$ occurs in the first part of $\bx$,
and we change $t_j$ from 1 to 2, where $x_j$ is the leftmost
occurrence of $x_i+1$ in $\bx$.

The inverse of this map is described as follows, where we construct a
102-avoiding ascent sequence of length $n$ from a ternary sequence of
length $n-1$, with $2k$ copies of 2's: Let~$t_e$ be the $(k+1)$-st 2
in $\bt$.  Then, for $2\le i<e$, $x_i=x_{i-1}$ if $t_i=0$, and
$x_i=x_{i-1}+1$ if $t_i$ is 1 or 2.  We then let $x_e$ be $x_j-1$
where $x_j$ is the $k$-th 2 in $\bt$, that is, the last 2 in the first
part of $\bt$.  The binary string of 0's and 1s strictly between $x_e$
and $x_d$, where $x_d$ is the next 2 in $\bt$ after $t_e$, now
determines the letters $x_{e+1}$ through $x_{d-1}$, where $t_i=0$
means $x_i=x_j-1$ and $t_i=1$ means $x_i=x_j$.  This is now repeated
for the remainder of $\bt$: $x_d$ is set to $x_m-1$, where $t_m$ is
the $k-1$-st 2 in $\bt$, and the binary string between $t_d$ and the
next 2 in $\bt$ determines the corresponding letters in $\bx$ in a way
analogous to the previous case.  This is then repeated for the rest of
$\bt$.
\end{proof}

Here is an example of how the bijection in the proof of
Theorem~\ref{thm-102} works:
\begin{align*}
\phantom{0}
\bt=~\,\;2\;1\;0\;0\;1\;2\;1\;0\;2\;2\;2\;1\;0\;0\;2\;0\;1\;2\;2\\ \bx=0\;1\;2\;2\;2\;3\;4\;5\;5\;6\;7\;6\;7\;6\;6\;5\;5\;6\;3\;0
\end{align*}
For example, the letter below the penultimate 2 in $\bt$ is $3=4-1$,
because the second~2 in $\bt$ is in a place where $\bx$ has a 4.
Since the third 2 in $\bt$ has a 6 below it in $\bx$, the letter below
the third last 2 in $\bt$ is $6-1=5$.  The letters between the 3 and
the 5 in $\bx$ mentioned here are 56, corresponding to the 01 in the
corresponding part of $\bt$.

We next show that the patterns 102, 0102 and 0112 are Wilf equivalent.

\begin{thm}\label{thm-102-wilf}
  The patterns 102, 0102 and 0112 are Wilf equivalent.  In particular,
  we have $\seta_{102}(n)=\seta_{0102}(n)$.
\end{thm}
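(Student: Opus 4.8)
The plan is to split the statement into two independent tasks: the set equality $\seta_{102}(n)=\seta_{0102}(n)$, and the enumeration $\numa_{0112}(n)=(3^{n-1}+1)/2$. Together with Theorem~\ref{thm-102} these give the Wilf equivalence of all three patterns.

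For the set equality I would argue exactly as in the opening paragraph of the proof of Theorem~\ref{thm-101}. Since $0102$ contains $102$ as a subpattern, avoiding $102$ forces avoidance of $0102$, so $\seta_{102}(n)\subseteq\seta_{0102}(n)$. For the reverse inclusion I take $\bx\in\seta_{0102}(n)$ and suppose, for contradiction, that $\bx$ contains $102$, say in a subsequence $\ldots b\ldots a\ldots c\ldots$ with $a<b<c$. As $0102$ is a subpattern of $01012$, Lemma~\ref{lemma-rgf} applies to $\bx$, so the displayed $b$ is preceded by some occurrence of $a$; this yields a subsequence $a\ldots b\ldots a\ldots c$, an occurrence of $0102$, contradicting $\bx\in\seta_{0102}(n)$. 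Hence the two sets coincide.

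The substance of the theorem is the count for $0112$, and here I would follow the template of Theorem~\ref{thm-102}: first establish a normal form for $0112$-avoiders, then encode it by ternary strings of length $n-1$ with an even number of $2$'s and invoke Lemma~\ref{lemma-tern}. The key structural claim is that an ascent sequence avoids $0112$ if and only if it is an RGF with the property that, for every letter $b\ge1$ occurring at least twice, no letter exceeding $b$ occurs after the second occurrence of $b$; the point is that by the ``in particular'' part of Lemma~\ref{lemma-rgf} the letter $b-1$ always precedes the first $b$, so a larger letter after the second $b$ would complete a $0112$. From this I would deduce the decomposition $\bx=WT$, where $W$ runs up to and including the first occurrence of the maximum letter $M$ and is the strictly increasing ``staircase'' $0,1,\ldots,M$ of first occurrences with additional $0$'s inserted among them, and $T$ is a (possibly empty) block of $0$'s followed by blocks $B_1B_2\cdots B_s$, each $B_j$ a nonempty word on the two letters $\{0,v_j\}$ beginning with $v_j$, where $M\ge v_1>v_2>\cdots>v_s\ge1$. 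A convenient bookkeeping point is that every ascent of $W$ is a first occurrence, so the staircase is a legitimate ascent sequence however the $0$'s are placed.

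To finish I would mirror the encoding of Theorem~\ref{thm-102}. In $T$ record a $2$ at the initial $v_j$ of each block, a $1$ at each later $v_j$ within a block, and a $0$ at each $0$; in $W$ record a $1$ at each new-maximum step and a $0$ at each inserted $0$. This uses $s$ twos, one per block. To meet the even-parity hypothesis of Lemma~\ref{lemma-tern}, I would pair each block value $v_j$ with the first-occurrence step of $v_j$ inside $W$---a step that currently carries a $1$, since $v_j\ge1$---and promote that $1$ to a $2$, giving $2s$ twos in all. The inverse reads the first $s$ of the $2$'s, which all lie in $W$, to recover the block values, and the last $s$, which all lie in $T$, to mark the block starts, and then rebuilds $W$ and $T$. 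I expect this inversion, together with checking that the reconstructed word is a genuine $0112$-avoiding ascent sequence, to be the most delicate step, just as the analogous inversion is in Theorem~\ref{thm-102}. The main obstacle overall is establishing the normal form cleanly, because the letter $0$ serves two roles---as filler within the staircase and as the low letter of every block of $T$---and one must check that the split at the first occurrence of $M$ separates these roles unambiguously.
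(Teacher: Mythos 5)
Your proposal is correct, and half of it coincides with the paper's proof: the set equality $\seta_{102}(n)=\seta_{0102}(n)$ is argued exactly as you do, by noting that $0102$ contains $102$ and, conversely, that in a sequence avoiding $0102$ (a subpattern of $01012$) Lemma~\ref{lemma-rgf} lets you prepend an $a$ to any occurrence $bac$ of $102$, producing a forbidden $abac$. For the $0112$ count you also establish essentially the same structural result as the paper---your ``RGF with no letter exceeding $b$ after the second occurrence of $b$'' condition and the ensuing $WT$ decomposition describe exactly the paper's normal form (a strictly increasing sequence $012\ldots k$ followed by a weakly decreasing sequence of positive letters, the whole arbitrarily interspersed with $0$'s)---but you finish differently. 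The paper counts the normal forms directly: choosing which $k$ of the $n-1$ positions after the initial $0$ are nonzero and then the shapes of the increasing and decreasing parts gives $1+\sum_{k=1}^{n-1}\binom{n-1}{k}\sum_{i=1}^{k}\binom{k-1}{k-i}$, which simplifies to $(3^{n-1}+1)/2$. You instead build a bijection onto ternary strings of length $n-1$ with an even number of $2$'s and invoke Lemma~\ref{lemma-tern}, mirroring the proof of Theorem~\ref{thm-102}; your parity device (promoting, for each block value $v_j$, the $1$ at the first occurrence of $v_j$ in $W$ to a $2$) is the analogue of the paper's promotion of the first occurrence of $x_i+1$ there, and it is sound because the $v_j$ are strictly decreasing, hence distinct, so the first $s$ twos of the code lie in $W$ and the last $s$ in $T$, which makes the decoding you sketch well defined (the $i$-th two in $T$ starts a block whose value is the staircase value at the $(s+1-i)$-th two in $W$, and all other letters are forced). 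The trade-off: the paper's summation is shorter and needs no inverse-map verification; your route costs that verification but is uniform with the $102$ case and, composed with the bijection of Theorem~\ref{thm-102}, yields an explicit bijection between $\seta_{0112}(n)$ and $\seta_{102}(n)$, which the paper's counting argument does not supply.
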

\begin{proof}
  Of course, an occurrence of 0102 implies an occurrence of 102. It
  suffices to show that the converse is also true.  Let $b$ be a
  letter involved in an occurrence of 102 in an ascent sequence $\bx$,
  and let $a$ and $c$ be letters such that $bac$ is such an
  occurrence, so $a<b<c$.  Now, 102 is a subpattern of 01012, so, by
  Lemma \ref{lemma-rgf}, $b$ must be preceded by~$a$, and we therefore
  have $abac$ as a subsequence in $\bx$, constituting an occurrence of
  0102.

  We next show that the number of 0112-avoiding ascent sequences of
  length $n$ is given by $(3^{n-1}+1)/2$, which, together with
  Theorem~\ref{thm-102}, completes the proof.

  We claim that the ascent sequences avoiding 0112 are precisely those
  that consist of a strictly increasing sequence $012\ldots k$
  followed by a weakly decreasing sequence, the entire sequence
  arbitrarily interspersed with~0's.  It is clear that a strictly
  increasing sequence followed by a weakly decreasing sequence cannot
  contain a 0112, and also that interspersing such a sequence with 0's
  cannot create a 0112.  We thus only need to show that an occurrence
  of 0112 in an ascent sequence prevents it from having this
  prescribed form.  That is straightforward, since the letter
  corresponding to the second 1 in such an occurrence could only
  belong to the weakly decreasing sequence, but that prevents it from
  being followed by a larger letter constituting the 2.

  To count the sequences described in the previous paragraph, observe
  that such a sequence is either the all zero sequence or else it
  consists, apart from the interspersed zeros, of a strictly
  increasing sequence $1,2,\ldots i$ for some $i\ge1$, followed by a
  weakly decreasing sequence of letters $a_j$ such that $1\le a_j\le
  i$. If there are $n-1-k$ zeros after the initial 0 in such a
  sequence $\bx$, then we can pick their positions in $\ch
  n-1,n-1-k,=\ch n-1,k,$ ways and there are $k$ places left for the
  increasing and weakly decreasing sequence.  Thus, the length of the
  increasing sequence can range from 1 to $k$.  If the length of the
  increasing sequence is~$i$ then the weakly decreasing sequence has
  length $k-i$ and its letters are positive and not exceeding $i$.  As
  pointed out in the proof of Theorem~\ref{thm-001}, the number of
  weakly decreasing sequences with such parameters is $\ch
  (k-i)+(i-1),k-i, =\ch k-1,k-i,$.  Thus, the total number of ascent
  sequences avoiding 0112 is
\[
1+\sum_{k=1}^{n-1}{\ch n-1,k,\sum_{i=1}^{k}{\ch k-1,k-i,}},
\]
which easily simplifies to $(3^{n-1}+1)/2$.
% \[
% 1+\sum_{k\ge1}{\ch n-1,k,\sum_{i\ge1}{\ch k-1,k-i,} = 
% 1+\sum_{k\ge1}{\ch n-1,k, 2^{k-1}} = 
% 1+\sum_{k\ge1}{\frac{1}{2}\ch n-1,k, 2^k}} = 
% 1+\frac{1}{2}(3^{n-1}-1)=(3^{n-1}+1)/2
% \]
\end{proof}

We now show that ascent sequences avoiding 021 are counted by the
Catalan numbers, but first a definition of a set of sequences that
turn out to be equinumerous, for each $n$, with $\seta_{021}(n)$.

\begin{defn}
  A \emph{restricted ascent sequence} is an ascent sequence
  $x_1x_2\ldots x_n$\linebreak with $x_i\ge m_i-1$ for all $i>0$, where $m_i$ is
  the maximum value among $x_1,x_2,\ldots,x_{i-1}$.  We denote the set
  of such sequences of length $n$ by $\clr_n$.
\end{defn}

\begin{thm}\label{thm-021}
  There is a bijection preserving ascents between $\clr_n$ and
  $\seta_{021}(n)$, for all~$n$.  Consequently, $\numa_{021}(n)=C_n$,
  the $n$-th Catalan number.
\end{thm}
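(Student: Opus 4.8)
The plan is to prove the theorem in three stages: first obtain a workable description of $\seta_{021}(n)$, then build the ascent-preserving bijection with $\clr_n$ directly from the definitions, and finally pin down the common cardinality as $C_n$.

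First I would record a characterization of $021$-avoidance. Since $x_1=0$ is a (weak) minimum of any ascent sequence and occupies the first position, an occurrence of $021$ can always take position $1$ as its ``$0$''; concretely, $\bx$ contains $021$ if and only if there are indices $j<k$ with $1\le x_k<x_j$ (if $x_j=0$ no such $k$ exists, so automatically $j\ge2$). Hence $\bx$ avoids $021$ exactly when every positive letter is a weak left-to-right maximum, i.e.\ $x_k\ge m_k$ whenever $x_k\ge1$, where $m_k=\max(x_1,\dots,x_{k-1})$. Equivalently, each letter of a $021$-avoider is either a weak left-to-right maximum or a $0$. This is to be compared with the defining condition of $\clr_n$, which forces each letter to be either a weak left-to-right maximum or equal to $m_k-1$. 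In both families the ``sub-maximal'' letters are pinned to a single value: $0$ in $\seta_{021}(n)$ and $m_k-1$ in $\clr_n$.

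This parallel suggests the map $\Phi\colon\clr_n\to\seta_{021}(n)$ that replaces every sub-maximal letter (every $x_i$ with $x_i=m_i-1$, which forces $m_i\ge1$) by $0$ and copies all remaining letters; the candidate inverse raises every $0$ occurring while the running maximum is at least $1$ back to (running maximum)$-1$, while fixing the initial block of $0$'s (those occurring before the first positive letter). The crux is that the running-maximum sequence is preserved step by step, since a sub-maximal letter never changes the maximum and the weak left-to-right maxima are transported verbatim; from this it follows that $\Phi(\bx)$ is again an ascent sequence, that it avoids $021$ by the characterization above, and that $\Phi$ and the described inverse are mutually inverse. Ascent preservation I would verify by a short case analysis on each adjacent pair $x_i,x_{i+1}$ according to which of the two is sub-maximal: in every case the sign of the comparison $x_i$ versus $x_{i+1}$ agrees with that of their images, because a sub-maximal letter lies below the (common) running maximum on both sides.

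It then remains to show that the common size of these sets is $C_n$, and this I expect to be the main obstacle, since the toggle bijection only yields $\numa_{021}(n)=|\clr_n|$. I would obtain the Catalan number by exhibiting one further ascent-preserving bijection from $\seta_{021}(n)$ (described, via the characterization, as a weakly increasing sequence of record values interspersed with $0$'s) to the $132$-avoiding permutations of length $n$, a classical Catalan family; this is the natural target given the bistatistic claim in the introduction, and transporting the ascent statistic across would simultaneously produce the Narayana refinement. The delicate point in this last step is that a new record in a $021$-avoider may exceed the previous maximum by more than $1$, the ascent-sequence bound permitting a jump of size up to $\asc+1-m_k$; the map to permutations must therefore account for these jumps rather than treating each record as a unit increment, and checking invertibility in their presence is where the real work lies. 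Composing this with $\Phi$ gives $\numa_{021}(n)=|\clr_n|=C_n$, completing the proof.
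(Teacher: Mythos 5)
Your first two stages are correct and coincide with the paper's own argument. The characterization (since $x_1=0$, an ascent sequence avoids $021$ exactly when its nonzero letters are weakly increasing, i.e.\ every positive letter is a weak left-to-right maximum) is the paper's, and your map $\Phi$ is exactly the paper's bijection, which it describes as interchanging $x_i-1$ and $0$ between successive LR-maxima; your case analysis of adjacent pairs supplies the ascent-preservation that the paper dismisses with ``clearly''.

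The gap is in your third stage, the actual Catalan count, and it is genuine. The paper does not construct any bijection to a Catalan family at this point: it simply cites Theorem~9 of Kitaev and Remmel \cite{kitaev-remmel-2plus2} (due to Gudmundsson), which states that $|\clr_n|=C_n$, so once $\Phi$ is in place the enumeration is immediate. You instead propose to exhibit an ascent-preserving bijection from $\seta_{021}(n)$ to $132$-avoiding permutations, but you never define the map; you explicitly defer ``the real work'' of handling records that jump by more than one (a real phenomenon --- in $01013$ the record $3$ exceeds the previous maximum $1$ by $2$) and of proving invertibility in their presence. As written, $\numa_{021}(n)=C_n$ is therefore asserted, not proved. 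It is telling that when the paper does build such a bijection later, namely $\phi:\clr_n\rightarrow\clp_n(231)$ in Theorem~\ref{thm-021-narayana}, it only proves that $\phi$ is injective and then invokes the already-known equality $|\clr_n|=C_n$ to conclude bijectivity; so the surjectivity/invertibility difficulty you flag is not a routine verification. To close your proof you must either carry out that construction and its inverse in full detail, or do what the paper does and cite the known enumeration of $\clr_n$.
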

\begin{proof}
  In a restricted ascent sequence, the only letters that can appear
  between two successive LR-maxima $x_i$ and $x_k$ or after its
  rightmost LR-maximum $x_i$ are $x_i-1$ and $x_i$.  Moreover, any
  ascent sequence satisfying this condition is a restricted ascent
  sequence.

  On the other hand, an ascent sequence $\bx$ is easily seen to avoid
  021 if and only its nonzero entries are weakly increasing. Thus, in
  particular, $\bx$ is such a sequence if and only if it has just two
  kinds of letters, namely $x_i$ and 0, between successive LR-maxima
  $x_i$ and $x_k$, and after its rightmost LR-maximum $x_i$.

  Interchanging $x_i-1$ and 0 between successive LR-maxima in the
  sequences described in each of the previous two paragraphs clearly
  defines a bijection preserving ascents between the two sets of
  sequences.  It is shown in Theorem 9 (due to Hilmar Gudmundsson) in
  \cite{kitaev-remmel-2plus2} that $|\clr_n|=C_n$, which completes the
  proof.
\end{proof}

Theorems~\ref{thm-101} and \ref{thm-021} now imply the following.

\begin{coro}\label{coro-101-wilf}
The patterns 101, 0101 and 021 are Wilf equivalent.
\end{coro}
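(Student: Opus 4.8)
The plan is to observe that Wilf equivalence is an equivalence relation determined entirely by the avoidance counting sequences, so it suffices to check that all three patterns yield the same value of $\numpn$ for every $n$. Since the two preceding theorems have already computed these counts, essentially no new work remains; the corollary is a bookkeeping consequence of the fact that two a priori independent enumerations both land on the Catalan numbers.

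First I would invoke Theorem~\ref{thm-101}, which gives $\numa_{101}(n)=\numa_{0101}(n)=C_n$ for all $n$. This settles the Wilf equivalence of the first two patterns directly; indeed that theorem establishes the stronger statement that the two avoidance sets themselves coincide, so their equinumerosity is immediate.

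Next I would invoke Theorem~\ref{thm-021}, which gives $\numa_{021}(n)=C_n$ for all $n$. Combining the two results, each of the three patterns $101$, $0101$ and $021$ has avoidance sequence equal to the Catalan numbers $C_n$. By the definition of Wilf equivalence recalled above, it follows that $101$, $0101$ and $021$ are pairwise Wilf equivalent, which is the assertion of the corollary.

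There is no genuine obstacle at this stage: the entire content has been carried by the proofs of Theorems~\ref{thm-101} and~\ref{thm-021}, where the Catalan enumerations were obtained via a bijection to $312$-avoiding permutations and via the bijection to restricted ascent sequences (together with the count $|\clr_n|=C_n$) respectively. If anything merits a remark, it is only that these two routes are structurally quite different, so their coincidence is the substantive phenomenon being recorded, rather than a fact requiring further argument.
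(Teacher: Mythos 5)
Your proof is correct and matches the paper exactly: the paper deduces this corollary directly from Theorems~\ref{thm-101} and~\ref{thm-021}, precisely as you do, since both give the Catalan numbers as the avoidance count. Nothing further is required.
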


An example of the bijection mentioned in the last paragraph of the
proof of Theorem~\ref{thm-021} is given by the following two
sequences, where the top one is a restricted ascent sequence, and the
bottom one a sequence avoiding 021, and where the only differences
between the two sequences are in the places of the zeros in the bottom
sequence (apart from the initial zero in each):%\\[-10mm]
\begin{align*}
0\;1\;2\;3\;2\;3\;4\;4\;3\;4\;6\;5\\ 0\;1\;2\;3\;0\;3\;4\;4\;0\;4\;6\;0
\end{align*}
Our goal is to give a bijection from the set of 021-avoiding ascent
sequences to 132-avoiding permutations, and it should be noted that
021 and 132 are equivalent as patterns.  Since there are very
transparent bijections between restricted and 021-avoiding ascent
sequences, as shown above, and between 132-avoiding and 231-avoiding
permutations, by reversing each permutation, we choose to exhibit a
bijection between restricted ascent sequences and 231-avoiding
permutations, which is more convenient.  First a definition and a
lemma.

\begin{defn}\label{def-maximal}
  A letter $x_i$ in an ascent sequence $x_1x_2\ldots x_n$ is
  \emph{maximal}\/ if $x_i$ is as large as it can be for its place,
  that is, if $x_i=\asc(x_1x_2\ldots x_{i-i})+1$.  In particular, the
  initial zero in an ascent sequence is defined to be maximal.  If a
  maximal letter $x_i$ satisfies $x_i=x_{i+1}=x_{i+2}=\cdots=x_{i+k}\ne
  x_{i+k+1}$, or if $i+k=n$, for some $k\ge1$, then we say that $x_i$
  is a \emph{repeated maximal letter}, and that $x_{i+k}$ is its
  \emph{last repetition}.  A maximal letter that is not repeated is
  its own last repetition.
\end{defn}
For example, the maximal letters in the following ascent sequence are
in bold and the last repetitions of repeated maximal letters are
overlined:
${\mathbf0}\,0\,{\ol0}\,{\mathbf1}\,0\,1\,2\,0\,{\mathbf4}\,
\ol4\,2\,3\,2\,0\,{\mathbf6}\,4\,1\,4\,{\mathbf8}\,8\,{\ol8}\,5$.

\begin{lemma}
  In a restricted ascent sequence $\bx=x_1x_2\ldots x_n$, let $x_i$ be
  the last repetition of the rightmost maximal letter in $\bx$, and
  let $\bx'=x_{i+1}x_{i+2}\ldots x_n$.  Then, if $\bx'$ is nonempty,
  we must have $x_{i+1}=x_i-1$ and the sequence obtained by
  subtracting $x_{i+1}$ from each letter in $\bx'$ is a restricted
  ascent sequence.
\end{lemma}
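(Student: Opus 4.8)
The plan is to prove the two assertions separately, writing $M=x_i$ for the common value of the rightmost maximal letter and all of its repetitions, and letting $x_\ell$ (with $\ell\le i$) denote the maximal letter itself, so that $x_\ell=x_{\ell+1}=\cdots=x_i=M$. Two preliminary observations carry most of the weight. First, since $x_\ell$ is maximal we have $x_\ell=\asc(x_1x_2\cdots x_{\ell-1})+1$, and because $\asc$ is monotone in the length of its argument every earlier letter is at most $x_\ell$; hence $M=\max(x_1,\ldots,x_i)=m_{i+1}$, while the ascents from position $\ell$ to position $i$ vanish, so $\asc(x_1\cdots x_i)=\asc(x_1\cdots x_\ell)\le(M-1)+1=M$. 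Second, since $x_i$ is the last repetition of the \emph{rightmost} maximal letter, no position beyond $i$ carries a maximal letter, which is to say $x_{i+t}\le\asc(x_1\cdots x_{i+t-1})$ for every $t\ge1$. Applying all three facts to $x_{i+1}$ settles the first assertion: non-maximality gives $x_{i+1}\le\asc(x_1\cdots x_i)\le M$, the restricted condition gives $x_{i+1}\ge m_{i+1}-1=M-1$, and the definition of last repetition gives $x_{i+1}\ne M$, so $x_{i+1}=M-1=x_i-1$.

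For the second assertion I would set $y_t=x_{i+t}-(M-1)$, so that the sequence obtained from $\bx'$ by subtracting $x_{i+1}=M-1$ is $y_1y_2\cdots$ with $y_1=0$. Subtracting a constant preserves every ascent internal to $\bx'$, and the passage from $x_i=M$ to $x_{i+1}=M-1$ is a descent, so the junction contributes nothing and $\asc(x_1\cdots x_{i+t-1})=\asc(x_1\cdots x_i)+\asc(x_{i+1}\cdots x_{i+t-1})$. The ascent-sequence condition for the $y$'s now follows from the non-maximality bound: for each $t$, $x_{i+t}\le\asc(x_1\cdots x_{i+t-1})=\asc(x_1\cdots x_i)+\asc(x_{i+1}\cdots x_{i+t-1})\le M+\asc(x_{i+1}\cdots x_{i+t-1})$, and subtracting $M-1$ turns this into $y_t\le\asc(y_1\cdots y_{t-1})+1$. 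The restricted condition for the $y$'s is immediate from that for $\bx$: at position $i+t$ the latter gives $x_{i+t}\ge\max(x_1,\ldots,x_{i+t-1})-1\ge\max(x_{i+1},\ldots,x_{i+t-1})-1$, and subtracting $M-1$ yields $y_t\ge\max(y_1,\ldots,y_{t-1})-1$; the same inequality, together with the fact that the prefix maximum is at least $M$, gives $y_t\ge0$. Hence the shifted suffix is a restricted ascent sequence.

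The step I expect to be the crux is the ascent-sequence condition in the second assertion. The bare ascent inequality only yields $x_{i+t}\le\asc(x_1\cdots x_{i+t-1})+1$, which is off by one exactly when $\asc(x_1\cdots x_i)=M$ (a case that genuinely arises) and would give the useless $y_t\le\asc(y_1\cdots y_{t-1})+2$. What saves the argument is the hypothesis that $x_i$ is the last repetition of the \emph{rightmost} maximal letter: this forces every subsequent letter to be non-maximal, sharpening the bound by one and closing precisely the gap. Recognizing that it is this non-maximality, rather than the defining inequality of ascent sequences, that must be invoked is the heart of the proof.
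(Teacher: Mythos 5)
Your proof is correct and takes essentially the same route as the paper's: both arguments hinge on the observation that, because $x_i$ is the last repetition of the \emph{rightmost} maximal letter, every letter after position $i$ is non-maximal, which sharpens the generic bound $x_k\le\asc(x_1\cdots x_{k-1})+1$ by exactly the needed one, and both combine this with the restricted condition (for the lower bound $x_{i+1}=x_i-1$ and nonnegativity) and the fact that subtracting a constant preserves ascents. Your write-up is merely more explicit than the paper's at a couple of points, such as splitting the ascent count across the descent at the junction and verifying the restricted condition for the shifted suffix via $\max(x_1,\ldots,x_{i+t-1})\ge\max(x_{i+1},\ldots,x_{i+t-1})$.
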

\begin{proof}
  Since $x_i$ is the last repetition of the rightmost maximal letter,
  $x_{i+1}$ must be smaller than $x_i$.  Since $\bx$ is a restricted
  ascent sequence, $x_{i+1}$ cannot be smaller than $x_i-1$.

  Now, if $k>i+1$ then $x_k<x_i+1+a$, where $a$ is the number of
  ascents in $\bx'$ that strictly precede $x_k$, for otherwise $x_k$
  would be a maximal letter, contrary to the assumption about~$x_i$.
  Equivalently, $x_k\le x_{i+1}+1+a$.  Also, $x_k\ge x_i-1= x_{i+1}$,
  since $\bx$ is a restricted ascent sequence.  Thus, subtracting
  $x_{i+1}$ from each letter in $\bx'$ produces a sequence of
  nonnegative integers that satisfies the condition defining ascent
  sequences.  The resulting sequence is a restricted ascent sequence
  because subtracting the same number from each letter doesn't affect
  the condition for restricted ascent sequences.
\end{proof}

As an example, the last repetition of the rightmost maximal letter in
the restricted ascent sequence $\bx=00101332232434665$ is the second
3, so $\bx'=2232434665$.  Reducing each letter in $\bx'$ by 2 we
obtain 0010212443, which is a restricted ascent sequence.

We now define a bijective map $\phi:\clr_n\rightarrow\clp_n(231)$,
where $\clp_n(231)$ is the set of 231-avoiding permutations of
length~$n$.  Because we construct $\pi=\phi(\bx)$ recursively we need
to define an auxiliary map $\om$, which takes two arguments, a
restricted ascent sequence $\by$ and an interval of integers whose
length equals that of $\by$.  We then set $\phi(\bx)=\om(\bx,[1,n])$,
where~$n$ is the length of $\bx$.

The map $\om$ is called recursively on three segments into which $\bx$
is partitioned, each of which is fed to $\om$ together with a part of
the original interval.  We let $\epsilon$ stand for the empty string
and the empty sequence, and set $\om(\epsilon,\emptyset)=\epsilon$.
Given a nonempty restricted ascent sequence $\bx$, let $m$ be its
rightmost maximal letter.  Then $\bx$ can be expressed uniquely as the
concatenation $LmR$, where $L$ is the part of $\bx$ preceding the last
repetition of $m$, and $R$ the part following that $m$.  We let
$\ol{R}$ be the restricted ascent sequence obtained by subtracting the
first letter of $R$ from each of its letters.

Suppose the last repetition of the rightmost maximal letter $m$ is in
place $p$ in $\bx$.  Intuitively,~$\phi$, via $\om$, writes, in the
first place in $\pi$, the number 1 if $m$ is a repeated maximal
letter, but $p$ otherwise.  That first letter in $\pi$ is then
followed by $\om(L,I)$, and then by $\om(R,J)$, where $I$ and $J$ are
the appropriate intervals.  Formally, we define $\om$ as follows when
$\bx=LmR$, $[a,b]$ is an interval of integers whose length equals the
length of $\bx$, $\ell$ is the length of $L$ and $\oplus$ is
concatenation: \def\op{\oplus}
\[
\om(LmR,[a,b])= 
\begin{cases}
  a\op\om(L,[a+1,a+\ell])\op\om(\ol{R},[a+\ell+1,b]),& \mbox{~if
    $m$ is repeated},\\[1ex]
  (a+\ell)\op\om(L,[a,a+\ell-1])\op\om(\ol{R},[a+\ell+1,b]),&
  \mbox{~otherwise}.
             \end{cases}
\]
Here is an example of how $\phi$ works, via $\om$.  Note that the
maximal letters in 011213232 are the leftmost occurrences of 0, 1, 2
and 3, with the second 1 the only repeated maximal letter:
\begin{align*}
&\phi(011213232)=\om(011213232,[1,9])=\\[1ex]
&6\op\om(01121,[1,5])\op\om(\ol{232},[7,9])=\\[1ex]
&6\op4\op\om(011,[1,3])\op\om(\ol{1},[5,5])\op\om(010,[7,9])=\\[1ex]
&6\op4\op\om(011,[1,3])\op\om(0,[5,5])\op8\op\om(0,[7,7])\op\om(0,[9,9])=\\[1ex]
&6\op4\op1\op\om(01,[2,3])\op\om(\emptyset,\emptyset)\op\om(0,[5,5])\op8\op\om(0,[7,7])\op\om(0,[9,9])=\\[1ex]
&6\op4\op1\op3\op\om(0,[2,2])\op\om(\emptyset,\emptyset)\op\om(\emptyset,\emptyset)\op\om(0,[5,5])\op8\op\om(0,[7,7])\op\om(0,[9,9])=\\[1ex]
&6\op4\op1\op3\op2\op\emptyset\op\emptyset\op5\op8\op7\op9=641325879.
\end{align*}

% Analyzing the map $\phi$ described above yields the following result.

\begin{thm}\label{thm-021-narayana}
  % The map $\phi:\clr_n\rightarrow\clp_n(231)$ is a bijection and
  The map $\phi$ described above is a bijection from $\clr_n$ to
  $\clp_n(231)$ and $\asc(\bx)=\des(\phi(\bx))$ for any
  $\bx\in\clr_n$.  Consequently, the distribution of the number of
  ascents on $\seta_{021}(n)$ (or, equivalently, $\clr_n)$ is given by
  the Narayana numbers.
\end{thm}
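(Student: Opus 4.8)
The plan is to argue by induction on $n$, following the recursive definition of $\om$, and to prove the three assertions (that $\phi$ lands in $\clp_n(231)$, that it carries $\asc$ to $\des$, and that it is a bijection) together. First I would record the two facts that make the recursion legitimate. A prefix of a restricted ascent sequence is again one, since both the ascent-sequence condition and the restriction condition on a letter depend only on earlier letters; hence $L$ is a restricted ascent sequence. The preceding lemma guarantees that $\ol{R}$ is a restricted ascent sequence and, moreover, that the letter of $\bx$ immediately following the last copy of $m$ equals $m-1$. In each branch of the definition the leading value, together with the ranges $[a+1,a+\ell]$ and $[a+\ell+1,b]$ (respectively $[a,a+\ell-1]$ and $[a+\ell+1,b]$), partitions $[a,b]$, so by induction $\om(\bx,[a,b])$ is a permutation of $[a,b]$.

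For $231$-avoidance I would use the standard recursive description: a permutation $\sigma$ avoids $231$ if and only if $\sigma=v\,\alpha\,\beta$, where $\alpha$ is a $231$-avoiding arrangement of the values below $v$, $\beta$ a $231$-avoiding arrangement of the values above $v$, and every value below $v$ precedes every value above $v$. In the non-repeated case $v=a+\ell$ and the definition of $\om$ reproduces this decomposition verbatim, with $\alpha=\om(L,[a,a+\ell-1])$ and $\beta=\om(\ol{R},[a+\ell+1,b])$, both $231$-avoiding by induction. In the repeated case $v=a$ is the global minimum, so $\alpha$ is empty and $\beta$ is the concatenation $\om(L,[a+1,a+\ell])\,\om(\ol{R},[a+\ell+1,b])$; since the first block uses only values smaller than those of the second, any occurrence of $231$ would have to lie entirely inside one of the two blocks, so $\beta$ avoids $231$ by induction, and prepending the minimum $a$ cannot create a new occurrence. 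Hence $\phi(\bx)\in\clp_n(231)$.

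Next I would match ascents with descents. Writing $\bx=LmR$, the ascents of $\bx$ are those inside $L$, those inside $R$ (equivalently inside $\ol{R}$, since shifting preserves ascents), plus a possible contribution at each of the two junctions. The $m$--$R$ junction is always a descent, because the next letter is $m-1$ by the preceding lemma, so it contributes no ascent; the $L$--$m$ junction is an equality in the repeated case (the consecutive run of $m$'s forces $x_{p-1}=m$) and a strict ascent in the non-repeated case (there $x_{p-1}<m$, the sole exception being $\bx=0$, handled as the base case). Thus $\asc(\bx)=\asc(L)+\asc(\ol{R})$ in the repeated case, and one more than that in the non-repeated case. On the permutation side, the junction between the two blocks of $\phi(\bx)$ is an ascent (small values precede large), the descents inside the blocks are $\des(\om(L,\cdot))=\asc(L)$ and $\des(\om(\ol{R},\cdot))=\asc(\ol{R})$ by induction, and the leading entry begins a descent exactly in the non-repeated case (where it is $a+\ell$, larger than the block that follows) and an ascent in the repeated case (where it is the minimum $a$). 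The two counts agree branch by branch, giving $\asc(\bx)=\des(\phi(\bx))$.

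Finally, since $|\clr_n|=C_n$ by Theorem~\ref{thm-021} and $|\clp_n(231)|=C_n$ as well, it suffices to prove $\phi$ injective, which I would do by inverting the recursion. Reading the first letter of a permutation $\pi$ on $[a,b]$: if it exceeds $a$ we are in the non-repeated case with $\ell=\pi_1-a$, the next $\ell$ entries (the values below $\pi_1$) and the remaining entries (the values above) are the two blocks, from which $L$ and $\ol{R}$ are recovered by induction, and then $m=\asc(L)+1$ and $R=\ol{R}+(m-1)$; if the first letter equals $a$ we are in the repeated case and must split the remaining word into a low block $X$, a permutation of an initial value-segment, and a high block $Y$, after which $m=\max L$ and $R=\ol{R}+(m-1)$. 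The crux of the argument, and the step I expect to be the main obstacle, is the uniqueness of this last split: several prefixes of the word may individually be permutations of an initial value-segment (for instance, after stripping the leading $1$ from $\phi(000)=123$, both the prefix $2$ and the prefix $23$ of the remaining word are), but only one choice reconstructs a genuine restricted ascent sequence, the others being ruled out by the requirement that the letter following $m$ be $m-1\ge0$ together with the ascent-sequence inequalities. I would prove this uniqueness within the same induction, using the fact that in the repeated case $L$ necessarily ends in its maximum $m$. With bijectivity established, $\asc$ is carried to $\des$ on $\clp_n(231)$; reversing permutations turns $231$-avoiders into $132$-avoiders and exchanges ascents with descents, so by the fact recalled in the introduction the descents over $\clp_n(231)$ follow the Narayana distribution. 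Transporting this back through $\phi$, and then through the ascent-preserving bijection of Theorem~\ref{thm-021}, yields the Narayana distribution of the number of ascents over $\seta_{021}(n)$.
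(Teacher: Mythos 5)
Your proposal has the same overall architecture as the paper's proof: induction on the recursive $LmR$ decomposition, a branch-by-branch analysis of $\om$ for both $231$-avoidance and the correspondence $\asc\mapsto\des$, and bijectivity deduced from injectivity together with $|\clr_n|=C_n=|\clp_n(231)|$. Your first two parts are correct (your junction analysis is in fact more explicit than the paper's, and phrasing $231$-avoidance via the decomposition $v\alpha\beta$ is only cosmetically different from the paper's ``no letter can be the $2$ of a $231$''). The gap is in injectivity, at exactly the point you flag as the crux, and your proposed disambiguation of the split does not suffice as stated. You accept a split $w=XY$ when $X,Y$ occupy the right value-segments, $L=\om^{-1}(X)$ ends in its maximum $m$, $R=\ol{R}+(m-1)$ has nonnegative letters, and $\bx=LmR$ is a genuine restricted ascent sequence. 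These conditions can hold for two different splits. Concretely, take $\pi=1326547\in\clp_7(231)$, so $w=326547$. The true split is $X=32$, $Y=6547$, giving $L=01$, $m=1$, $R=0121$ and the preimage $0110121$. But the split $X=32654$, $Y=7$ also passes your test: $\om^{-1}(32654)=01012$ is a restricted ascent sequence ending in its maximum $2$, and $\bx'=01012\cdot2\cdot1=0101221$ is again a genuine restricted ascent sequence with $R'=1\ge0$. Yet $\bx'$ is not a preimage of $\pi$: in $0101221$ the appended $2$ is not a maximal letter, so the rightmost maximal letter is the $1$ in position two, which is not repeated, and $\phi(0101221)$ begins with $2$, not $1$. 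What your test omits is the requirement that the appended $m$ actually be the last repetition of the rightmost maximal letter of the reconstructed sequence; equivalently, that the last letter of $L$ be the last repetition of $L$'s \emph{own} rightmost maximal letter (i.e.\ $L$'s own decomposition has empty $R$-part), which is strictly stronger than ``$L$ ends in its maximum''.

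With the corrected condition the argument can be completed, and that is in essence what the paper does instead of building an explicit inverse. Unfolding ``empty $R$-part'' recursively shows that a valid low block must have the shape: an initial run $a+1,a+2,\ldots,a+j$ of consecutive values, then the maximum of the block, then the image of an arbitrary shorter restricted ascent sequence. Given this shape, two competing splits force the shorter block to be a pure consecutive run, hence $L=00\cdots0$ and $m=0$, making $R$ begin with $-1$, which is impossible unless $Y$ is empty---a contradiction. The paper packages the same structural fact differently: it compares two hypothetical preimages of $\pi$ and notes that in the repeated case $\pi$ must begin with $1,2,\ldots,k$, where $k$ is the number of repetitions (the nested $R$-parts being empty, the recursion keeps writing interval minima), so $k$ is read off $\pi$; peeling these off lands in the non-repeated case, where the first letter written determines the block lengths, so the decompositions of the two preimages align and induction finishes. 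In your example this reads the run ``$1$'' and then $\pi(2)=3$ directly off $\pi$, forcing $|X|=2$ and eliminating the spurious split at once.
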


\begin{proof}
  We first prove the claim $\asc(\bx)=\des(\phi(\bx))$, then show that
  $\phi$ produces a 231-avoiding permutation, and finally show that
  $\phi$ is bijective.  By Theorem \ref{thm-021} the claim about
  ascents applies equally to sequences in $\clr_n$ and
  $\seta_{021}(n)$.

  It follows by induction from the definition of $\phi$, via the
  definition of $\om$, that $\phi$ yields a permutation of the
  letters~1 through $n$, where $n$ is the length of $\bx$.  When $\om$
  writes out a letter $f$, that letter is followed by two words,
  constructed from $L$ and $R$.  In the case when the rightmost
  maximal letter $m$ of $\bx$ is repeated, and so is not the second
  letter in an ascent in $\bx$, $f$ will be smaller than all the
  letters eventually following it in $\pi$, thus not causing a descent
  in $\pi$.  If $m$ is not repeated, and is not the first letter in
  $\bx$, then $m$ is the second letter in an ascent in $\bx$, and the
  letter $f=a+\ell$ written out by $\om$ is greater than all the
  letters arising from applying $\om$ to $L$, and will thus be the
  first letter in a descent in $\pi$.  Thus $\phi$, via $\om$,
  translates ascents in $\bx$ into descents in $\pi$, and non-ascents
  into non-descents.

  Analyzing the two cases in the previous paragraph shows that a
  letter $f$ written out by~$\om$ at any stage can not be the 2 in an
  occurrence of the pattern 231 in $\pi$, since $f$ is either smaller
  than everything following it, or else followed by letters smaller
  than $f$ that then are followed by letters larger than $f$.  Since
  no letter in $\pi$ can be the 2 in an occurrence of 231, there can
  be no occurrence of 231 in $\pi$, so $\phi$ is indeed a map to
  $\clp_n(231)$.

  We now show that $\phi$ is injective.  This is trivially true for
  $n=1$.  Suppose $\bx=LmR$ and $\bx'=L'm'R'$ are restricted ascent
  sequences of length at least 2, where $m$ and $m'$ are the
  respective last repetitions of their rightmost maximal letters.  If
  $\pi=\phi(\bx)=\phi(\bx')$ then either both $m$ and $m'$ are
  repeated maximal letters, in which case $\pi(1)=1$, or neither is.
  If neither is repeated then $\pi(1)=p>1$, where $m$ and $m'$ are in
  place $p$ in $\bx$ and $\bx'$, respectively.  Thus, we must have
  that $\om(L,I)=\om(L',I)$, where $I$ is the interval passed with $L$
  and $L'$ to $\om$, and likewise for $R,R'$ and the interval $J$
  passed with them to $\om$.  By induction, since $L,L',R,R'$ are all
  strictly shorter than $\bx$, we may assume that this implies that
  $L=L'$ and $R=R'$.  That, in turn, implies that $\bx=\bx'$.

  If $m$ and $m'$ are repetitions of the rightmost maximal letter in
  $\bx$ and $\bx'$, respectively, then $\pi(1)=1$.  In fact, $\pi$
  then starts with $12\ldots k$ where $k$ is the number of repetitions
  of the rightmost maximal letters $m$ and $m'$, which must therefore
  be repeated the same number of times.  Once $\om$ has been applied
  successively to these repetitions we are back in the case where $m$
  and $m'$ are not repeated and, by an argument identical to the one
  applied to that case above, we can infer that $\bx=\bx'$.

  Since $\phi:\clr_n\rightarrow\clp_n(231)$ and $\phi$ is injective,
  and we know that $\clr_n$ and $\clp_n(213)$ are equinumerous, the
  map $\phi$ is a bijection.
\end{proof}

There is an easy, and well known, bijection from 231-avoiding
permutations to non-crossing partitions, that is, set partitions of
$\{1,2,\ldots,n\}$ that do not contain $a<b<c<d$ with $a$ and $c$ in
one block and $b$ and $d$ in another.  Namely, given any permutation,
split it into blocks after each ascent.  As an example, the
231-avoiding permutation 641325879 is split into the blocks
641-32-5-87-9, or 146-23-5-78-9 in our standard notation.

It is straightforward to check that if the permutation avoids 231,
then the resulting partition will have its blocks in order of
increasing minima.  Also, a crossing in the resulting partition would
imply that there were letters $a<b<c<d$ appearing in the corresponding
permutation in the order $cadb$ or $dbca$, but then $cdb$ or $bca$
would have formed the pattern 231 in that permutation.  With this in
hand it is easy to show that when restricted to 231-avoiding
permutations, this map is a bijection to non-crossing partitions.

Composing this bijection with the bijection described before
Theorem~\ref{thm-021-narayana} gives a bijection between non-crossing
partitions and ascent sequences avoiding 021. Using the same examples
as before, we see that the 021-avoiding ascent sequence
$\bx=011213232$ corresponds to the non-crossing partition
146-23-5-78-9.

\section{Further remarks, conjectures and open
  problems}\label{sec-conj}

For permutations, it was conjectured indepently by Stanley and Wilf,
and proved by Marcus and Tardos \cite{marcus-tardos}, that the number
of permutations of length $n$ avoiding any single pattern $p$ is
bounded by $C^n$ for some constant $C=C(p)$ depending only on
$p$. Define the \emph{growth rate} of a sequence $x_0,x_1,x_2,\ldots$
to be the limit $\displaystyle\lim_{n\rightarrow\infty}(x_n)^{1/n}$,
if it exists. It is known that this limit always exists in the case of
single patterns, so if $C(p)$ above is chosen as small as possible it
is the growth rate in that case.  Some work has recently been done on
finding the possible growth rates of permutation classes, that is, the
growth rates of sets of permutations avoiding sets of patterns,
yielding very interesting results on what growth rates are possible.
See \cite{klazar} for a general overview, and \cite{vatter} for the
most recent results on permutation classes.

From the results we have presented here (see Table
\ref{table-numbers}), it is straightforward to show that the growth
rates for 001, 102 and 101 are 2, 3 and 4, respectively.  Moreover, if
Conjecture~\ref{conj-210} below is true, the growth rate for 210 is 9,
as can be derived from Proposition~1 in~\cite{mbm-xin}.  Also, should
Conjecture~\ref{conj-0123} hold, we would have that the growth rate
for 0123 was approximately $3.247$, as can be computed from the linear
recurrence $a(n)=5a(n-1)-6a(n-2)+a(n-3)$ given for sequence {\sc\small
  A080937} in \oeis~\cite{oeis}.  Moreover, Conjecture~\ref{conj-0021}
implies a growth rate of 5 for 0021 (and 1012), given the recursive
formula for sequence {\sc\small A007317} in \cite{oeis}.

It is a tempting conjecture that the Marcus-Tardos Theorem mentioned
above also holds for ascent sequences, but this requires more work to
be justified.

By reversing a 231-avoiding permutation we obtain a 132-avoiding
permutation, and turn each descent into an ascent, and conversely.
This, together with Theorem~\ref{thm-021-narayana}, shows that the
number of ascents has the same distribution on 021-avoiding (a.k.a.\
132-avoiding) ascent sequences and 132-avoiding permutations.  We
conjecture that this result can be strengthened as follows.  
% Let $\rlmin(s)$ be the number of right-to-left minima in an integer
% sequence $s=x_1x_2\ldots x_n$.  For example,
% $\rlmin(0112303552032325)=3$, since the right-to-left minima are the
% rightmost occurrences of $5,2$ and $0$.

\begin{conj}\label{conj-bi-021}
The bistatistic $(\asc,\rlmin)$ has the same distribution on
$\seta_{021}(n)$ and on 132-avoiding permutations of length $n$.
\end{conj}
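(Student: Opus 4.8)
The plan is to go beyond Theorem~\ref{thm-021-narayana}, since the bijection there preserves only the single statistic $\asc$. Reversing permutations sends $\asc$ on $132$-avoiding permutations to $\des$ on $231$-avoiding ones, and $\rlmin$ to $\lrmin$, so the conjecture is equivalent to the claim that $(\asc,\rlmin)$ on $\seta_{021}(n)$ and $(\des,\lrmin)$ on $\clp_n(231)$ are equidistributed. The tempting first guess is that the composite $\seta_{021}(n)\leftrightarrow\clr_n\xrightarrow{\phi}\clp_n(231)$ already matches these, but it does not: the $021$-avoiding sequence $011203030$ has a single right-to-left minimum, whereas its image $641325879$ under this composite has three left-to-right minima. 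Hence a new argument is needed, and I would match bivariate generating functions.

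On the permutation side I would compute $G(x,q,t)=\sum x^{|\sigma|}q^{\asc(\sigma)}t^{\rlmin(\sigma)}$, summed over all $132$-avoiding permutations (with the empty one contributing $1$), using the decomposition $\sigma=\sigma_L\,n\,\sigma_R$ by the position of the largest letter $n$. Here $\sigma_L$ uses the larger values and $\sigma_R$ the smaller ones, and one reads off $\asc(\sigma)=\asc(\sigma_L)+\asc(\sigma_R)$, increased by $1$ exactly when $\sigma_L$ is nonempty, together with $\rlmin(\sigma)=\rlmin(\sigma_R)$ when $\sigma_R$ is nonempty and $\rlmin(\sigma)=\rlmin(\sigma_L)+1$ when $n$ is the last letter. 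Writing $\bar G=G(x,q,1)$, this yields
\[
G=1+xt\bigl[1+q(G-1)\bigr]+x\bigl[1+q(\bar G-1)\bigr](G-1),
\]
which at $t=1$ reduces to the Narayana generating function, as it must by Theorem~\ref{thm-021-narayana}.

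It then remains to derive the same functional equation for $H(x,q,t)=\sum x^{|\bx|}q^{\asc(\bx)}t^{\rlmin(\bx)}$, summed over $\seta_{021}$. I would use two facts: a sequence lies in $\seta_{021}$ if and only if its nonzero entries are weakly increasing (from the proof of Theorem~\ref{thm-021}), and for such a sequence $\rlmin(\bx)=1+d$, where $d$ is the number of distinct values occurring after the final $0$. The natural decomposition to attempt is by that final $0$: the trailing $0$ supplies the ``$+1$'' and should play the role of $n$ being last, while the weakly increasing positive tail after it should play the role of $\sigma_R$, its number of distinct values matching $\rlmin(\sigma_R)$.

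The main obstacle is making this decomposition factor the generating function exactly as on the permutation side. Unlike the global maximum of a permutation, the final $0$ of an ascent sequence does not cleanly separate it into two independent pieces: the defining inequality $x_i\le\asc(x_1\cdots x_{i-1})+1$ ties the number of admissible positive values in the tail to the number of ascents already accumulated in the prefix, so the ``after'' part is not freely a smaller object of the same kind. Reconciling this coupling---showing that the contribution of the tail nonetheless assembles into the factors $xt\bigl[1+q(G-1)\bigr]$ and $x\bigl[1+q(\bar G-1)\bigr](G-1)$---is where the real work lies. Should this resist a direct attack, I would instead route both families through Dyck paths, seeking bijections under which $\asc$ and $\rlmin$ become a common pair of path statistics (for instance valleys and returns to the $x$-axis); verifying that the classical $132$-avoiding-to-Dyck bijection and a suitable $\seta_{021}$-to-Dyck bijection realize the same pair would then be the crux.
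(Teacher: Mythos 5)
This statement is a \emph{conjecture} in the paper: the authors give no proof of it, recording only that Bruce Sagan proved it (personal communication) after a preliminary version was circulated. So there is no proof in the paper to compare yours against; the only question is whether your argument establishes the claim on its own. It does not: what you have written is a sensible research plan whose two proposed routes both stop exactly where the work would begin. Your preliminary observations do check out. The sequence $011203030$ lies in $\seta_{021}(9)$, corresponds to the restricted ascent sequence $011213232$, hence under $\phi$ to $641325879$, and indeed $\rlmin(011203030)=1$ while $\lrmin(641325879)=3$, so the paper's composite bijection cannot simply be reused. Likewise, your functional equation
\[
G=1+xt\bigl[1+q(G-1)\bigr]+x\bigl[1+q(\bar G-1)\bigr](G-1)
\]
for $132$-avoiding permutations is correctly derived from the decomposition $\sigma=\sigma_L\,n\,\sigma_R$, and your identity $\rlmin(\bx)=1+d$ (with $d$ the number of distinct values after the final zero) is correct on $\seta_{021}$.

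The gap is the entire ascent-sequence half of the argument. You never derive any functional equation for $H(x,q,t)$, and the obstruction you name is genuine rather than technical: the letters admissible after the final $0$ of $\bx\in\seta_{021}(n)$ are bounded in terms of the number of ascents of the prefix, so the tail is not an independent smaller object of the same kind, and the product structure that drives the permutation-side recursion is simply unavailable. Overcoming this would require a new idea---for instance a catalytic variable recording the current maximum or ascent count, eliminated afterwards by a kernel-type argument---and no such idea is supplied. The Dyck-path fallback is in the same state: you name candidate statistics (valleys, returns to the axis) but construct neither a bijection from $\seta_{021}(n)$ to Dyck paths realizing $(\asc,\rlmin)$ as that pair, nor the verification that the classical bijection for $132$-avoiding permutations realizes the same pair. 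As written, the proposal leaves the conjecture exactly as open as the paper does.
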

After a preliminary version of this paper was posted on the internet
Bruce Sagan proved Conjecture~\ref{conj-bi-021} (personal
communication).

Let $\fwd$ be the length of the maximal final weakly decreasing
sequence in an ascent sequence.  For example,
$\fwd(01123035523220)=4$, since 3220 has length 4.  
% Also, let $\rlmax$ be the number of \emph{right-to-left maxima} in an
% ascent sequence $\bx$, that is, the number of letters $x_i$ in
% $\bx=x_1x_2\ldots x_n$ such that $x_i>x_j$ for all $j>i$.  For
% example, $\rlmax(01123035523220)=4$, since the right-to-left maxima
are the rightmost occurrences of $0,2,3$ and $5$.  
We then have the following conjecture, where $\zeros(\bx)$ is the
number of 0's in an ascent sequence $\bx$.

\begin{conj}\label{conj-0012}
  We have $\numa_{0012}(n)=C_n$, the $n$-th Catalan number.  Moreover,
  the bi\-statistic $(\asc,\fwd)$ on $\seta_{0012}(n)$ has the same
  distribution as $(\asc,\rlmax)$ does on permutations avoiding the
  pattern 132.  In particular, this implies that the number of ascents
  has the Narayana distribution on $\seta_{0012}(n)$.  Also, the
  bistatistics $(\asc,\fwd)$ and $(\asc,\zeros)$ have the same
  distribution on 0012-avoiding ascent sequences.
\end{conj}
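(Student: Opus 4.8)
The plan is to pin down the exact structure of $\seta_{0012}(n)$, then transport the statistics to $132$-avoiding permutations through a recursive bijection modeled on the map $\phi$ of Theorem~\ref{thm-021-narayana}, and finally to settle the internal equidistribution $(\asc,\fwd)\sim(\asc,\zeros)$ by an ascent-preserving involution.

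First I would prove a structure lemma. Since $0012$ occurs in $01012$ (as the subword $0,0,1,2$), Lemma~\ref{lemma-rgf} guarantees that every $\bx\in\seta_{0012}$ is an RGF, so the first occurrences of $0,1,\ldots,M$, where $M$ is the largest letter, appear in increasing order. An occurrence of $0012$ is a subsequence $a\,a\,b\,c$ with $a<b<c$, so avoidance is equivalent to requiring that, for every repeated letter $a$, the letters exceeding $a$ that occur after the second copy of $a$ are weakly decreasing. Two distinct first occurrences above $a$ would form an increasing pair, so at most one new maximum can occur after the second copy of any repeated letter; since the first occurrences increase, this forces the second copy of every repeated letter to lie at or beyond the first occurrence of $M-1$. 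Hence the first $M$ letters are distinct, and an RGF ascent-sequence prefix with distinct letters must read $0\,1\,2\cdots(M-1)$. Thus $\bx=0\,1\,2\cdots(M-1)\,S$, where $S$ is a tail on $\{0,\ldots,M\}$ that contains $M$ and in which, after the first appearance of each value $a$, the larger letters are weakly decreasing.

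Next I would build a recursive bijection $\Phi\colon\seta_{0012}(n)\to\clp_n(132)$ carrying $\asc$ to $\asc$ and $\fwd$ to $\rlmax$. The target side has the standard first-return decomposition $\pi=\alpha\,n\,\beta$, in which $\alpha$ uses the top values and $\beta$ the bottom values, with $\rlmax(\pi)=1+\rlmax(\beta)$ and $\asc(\pi)=\asc(\alpha)+\asc(\beta)+[\alpha\neq\emptyset]$. The aim is to read a matching decomposition off the tail $S$: its ``records plus weakly-decreasing-above-each-record'' description should cut $\bx$ into the piece governing the trailing weakly decreasing run (mapped to $\beta$, so that $\fwd$ becomes $1+\rlmax(\beta)$) and a complementary piece (mapped to $\alpha$), with the single admissible new maximum playing the role of $n$. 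Checking recursively that this respects ascents then yields both $\numa_{0012}(n)=C_n$ and the Narayana ascent distribution, and upgrades the count to the full bistatistic claim $(\asc,\fwd)\sim(\asc,\rlmax)$; composing with reversal lets me work instead with $231$-avoiders and $\des,\lrmax$, exactly as in Theorem~\ref{thm-021-narayana}, should that prove more convenient.

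Finally, for $(\asc,\fwd)\sim(\asc,\zeros)$ I would exploit that both statistics localize to the tail: for $M\ge1$ the prefix is strictly increasing and contributes a single $0$, so $\fwd(\bx)=\fwd(S)$ and $\zeros(\bx)=1+\zeros(S)$. It therefore suffices to produce an involution on admissible tails $S$ that fixes the length, the maximum and the number of ascents while interchanging the length of the final weakly decreasing run with the number of zeros; prepending the fixed prefix then gives the desired involution on $\seta_{0012}(n)$. I expect this tail involution to be routine once the record structure of $S$ is in hand. The main obstacle is the construction of $\Phi$: although the structure lemma is clean, the tail $S$ is markedly more intricate than the $101$- or $021$-avoiders, and threading the trailing weakly decreasing run through the recursion so that it matches $\rlmax$ while ascents are simultaneously preserved is where the real difficulty lies; making the tail decomposition align exactly with the first-return decomposition $\pi=\alpha\,n\,\beta$ is the step I expect to consume most of the effort.
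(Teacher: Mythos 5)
You should first be aware that the paper contains no proof of this statement: it is Conjecture~\ref{conj-0012}, stated and deliberately left open (the authors only record that Bruce Sagan later proved the \emph{different} Conjecture~\ref{conj-bi-021}). So there is nothing in the paper to compare your argument against; your proposal has to stand on its own. Its one solid component is the structure lemma, which is correct: $0012$ is indeed a subpattern of $01012$, so Lemma~\ref{lemma-rgf} applies; your observation that two first occurrences of values above $a$ would form an ascending pair forces every repeated letter's second copy past the first occurrence of $M-1$, whence $\bx=012\cdots(M-1)\,S$ with the stated condition on the tail $S$. The localization $\fwd(\bx)=\fwd(S)$ and $\zeros(\bx)=1+\zeros(S)$ for $M\ge1$ also checks out, since $S$ contains $M$ and therefore the final weakly decreasing run cannot reach back into the strictly increasing prefix.

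However, the two steps that would constitute the actual proof are absent. The map $\Phi$ is never defined: you state the first-return decomposition $\pi=\alpha\,n\,\beta$ on the permutation side and the properties you want ($\fwd\mapsto 1+\rlmax(\beta)$, ascents preserved), but the corresponding cut of $\bx$ is described only by what it ``should'' do, and you yourself identify making it align with $\alpha\,n\,\beta$ as the step consuming most of the effort---that step \emph{is} the proof. A concrete obstruction you will hit: the tail $S$ (and either piece of any cut of it) is not an ascent sequence, nor, after renormalization, obviously an element of $\seta_{0012}$ of smaller length, so your recursion has no class of objects to recurse on. Compare Theorem~\ref{thm-021-narayana}, where the paper must first prove a closure lemma showing that the reduced suffix $\ol{R}$ is again a \emph{restricted ascent sequence} before the recursive map $\om$ can even be defined; the analogue of that lemma for $0012$-avoiders is exactly what is missing here, and it is not routine because the defining condition on $S$ refers to second occurrences across the cut. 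The final involution exchanging $\fwd$ and $\zeros$ on tails is likewise only ``expected to be routine'' and never exhibited. In short: a correct and useful opening reduction, but the conjecture remains unproven by this proposal; the count $\numa_{0012}(n)=C_n$, the bistatistic equidistribution with $(\asc,\rlmax)$ on $132$-avoiders, and the $(\asc,\fwd)\sim(\asc,\zeros)$ claim are all still open at the end of your argument.
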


We should point out that the bistatistics $(\asc,\rlmin)$ on
132-avoiding permutations and $(\asc,\rlmax)$ on 231-avoiding
permutations mentioned in Conjectures \ref{conj-bi-021} and
\ref{conj-0012} have the same distribution as double upsteps and the
number of returns to the $x$-axis on Dyck paths.  We have found more
apparent equidistributions of bistatistics of a similar kind to those
in Conjectures~\ref{conj-bi-021} and~\ref{conj-0012}, and it seems
certain that many more could be found easily.

\begin{conj}\label{conj-210}
  The number $\numa_{210}(n)$ equals the number of non-3-crossing set
  partitions of $\{1,2,\ldots,n\}$.  See sequence A108304 in
  \cite{oeis}, where non-3-crossing is also defined.
\end{conj}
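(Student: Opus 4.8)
The statement equates two counting sequences, and the subtlety is that one of them --- the number of non-$3$-crossing partitions of $\{1,2,\ldots,n\}$ --- is known by Bousquet-M\'elou and Xin \cite{mbm-xin} to be $P$-recursive with a transcendental (though $D$-finite) generating function. So one should not expect an elementary closed form for $\numa_{210}(n)$; the plan is instead to derive for $\numa_{210}(n)$ a description of the same analytic type and to match it against \cite{mbm-xin}, ideally by an explicit bijection that explains the coincidence rather than by re-deriving their formula.

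The first step is to record the structure of $210$-avoiding ascent sequences. An occurrence of $210$ is exactly a strictly decreasing subsequence of length $3$, so $\bx$ avoids $210$ if and only if its longest strictly decreasing subsequence has length at most $2$; by Mirsky's theorem this is equivalent to $\bx$ being a union of two weakly increasing subsequences. Building $\bx$ from left to right, I would isolate the minimal state needed to continue, namely $(\asc(\bx),\ell,\mu,M)$, where $\ell$ is the last letter, $\mu$ the largest letter so far, and $M$ the largest value occurring as the bottom of a strict descent. A new letter $v$ may be appended precisely when $M\le v\le\asc(\bx)+1$; it creates an ascent iff $v>\ell$, it becomes a new strict-descent bottom, raising $M$ to $v$, iff $v<\mu$, and it raises $\mu$ iff $v\ge\mu$. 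A key simplification is that the descent-bottom values are then forced to be weakly increasing, so the two weakly increasing subsequences guaranteed above are canonically the weak left-to-right maxima and the strict descent bottoms.

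From here I see two routes. In the generating-function route I would encode the growth rule by a series in the size variable together with catalytic variables recording the active part of $(\ell,\mu,M)$ relative to $\asc(\bx)$, obtaining a linear functional equation, and then attempt the obstinate kernel method exactly as in \cite{mbm-xin}, comparing the resulting equation or extracted recurrence with theirs. In the bijective route I would exploit that the ``union of two weakly increasing subsequences'' picture matches the two-rowed objects counting non-$3$-crossing partitions: under the Chen--Deng--Du--Stanley--Yan correspondence between set partitions and vacillating tableaux, a partition is non-$3$-crossing iff its tableau has at most two rows, and such tableaux are equivalent to the confined walks that \cite{mbm-xin} enumerate. I would send a $210$-avoiding ascent sequence, read left to right, to such a walk, letting the weak-left-to-right-maxima subsequence drive one coordinate and the descent-bottom subsequence the other, with the append condition $M\le v\le\asc(\bx)+1$ translating into the step set and the wall constraints.

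The main obstacle is the final step of either route. In the generating-function route the functional equation carries two catalytic variables and its kernel-method solution is precisely as delicate as the one in \cite{mbm-xin}, so matching the two without a bijection essentially amounts to re-proving their theorem. In the bijective route the real difficulty is that the ascent-sequence legality constraint $v\le\asc(\bx)+1$ has no direct analogue on the partition side, while the height-$\le 2$ (equivalently, $210$-avoidance) condition has no direct analogue on the ascent-sequence side; showing that these two a priori unrelated restrictions correspond exactly under the map --- that the legality bound is precisely what confines the associated walk, and that exceeding height $2$ is precisely what creates a decreasing triple --- is where I expect the substantive work to lie.
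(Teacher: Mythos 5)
There is no proof in the paper to compare yours against: the statement you were given is Conjecture~\ref{conj-210}, which the paper leaves open, supporting it only by computational data (Table~\ref{table-numbers}) and by the remark that, if true, it would yield growth rate $9$ for $\numa_{210}(n)$ via Proposition~1 of \cite{mbm-xin}. Your proposal, by its own admission, is also not a proof: it is a reduction plus a research plan, and both of your routes stop exactly at the step that constitutes the entire mathematical content of the statement. In the kernel-method route you would arrive at a functional equation in two catalytic variables but do not solve it or match it against the equation of \cite{mbm-xin}; in the bijective route you do not construct the map, and you yourself identify the unresolved tension between the ascent-sequence legality bound $v\le\asc(\bx)+1$ (which has no partition-side counterpart) and the height-at-most-two condition on vacillating tableaux (which has no ascent-sequence-side counterpart). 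So the gap is not a subtle flaw in an otherwise complete argument; it is that the theorem-proving part is missing entirely, consistent with the fact that the source paper could not close it either and stated it as a conjecture.

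That said, the preparatory analysis you give is correct and worth keeping. An occurrence of $210$ is indeed a strictly decreasing subsequence of length $3$, so avoidance is equivalent to coverability by two weakly increasing subsequences (though the covering statement is the sequence form of Dilworth's theorem rather than Mirsky's); and your left-to-right state is right: once some letter sits strictly below an earlier larger letter, every subsequent letter must be at least that letter, so a new letter $v$ is legal iff $M\le v\le\asc(\bx)+1$, where $M$ is the largest such ``inversion bottom,'' and these bottoms are automatically weakly increasing. This is a sound setup for either a transfer-matrix/functional-equation attack or a bijection to the two-row tableaux (equivalently, confined lattice walks) that enumerate non-$3$-crossing partitions, but as it stands the proposal establishes nothing about the conjectured equality.
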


\begin{conj}\label{conj-0123}
  The number $\numa_{0123}(n)$ equals the number of Dyck paths of
  semilength $n$ and height at most 5.  See sequence A080937 in
  \cite{oeis}.
\end{conj}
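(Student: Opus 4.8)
The plan is to verify that $\numa_{0123}(n)$ satisfies the order-three linear recurrence $a(n)=5a(n-1)-6a(n-2)+a(n-3)$ recorded for {\sc\small A080937}; since that sequence is the denominator-degree-$3$ rational generating function counting Dyck paths of height at most $5$, establishing the recurrence (or, equivalently, the generating function) is tantamount to the stated enumeration. First I would reformulate the avoidance condition: because the pattern $0123$ has all-distinct, strictly increasing letters, an ascent sequence avoids $0123$ exactly when its longest \emph{strictly} increasing subsequence has length at most $3$. Note that, in contrast to the patterns handled in Theorems~\ref{thm-101}--\ref{thm-021-narayana}, the pattern $0123$ is \emph{not} a subpattern of $01012$, so Lemma~\ref{lemma-rgf} does not apply and these sequences need not be RGFs (for instance $01013$ avoids $0123$ but is not an RGF). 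Consequently no collapse to non-crossing partitions is available, and a more computational, transfer-matrix approach seems to be the right tool.

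Concretely, I would read the sequence from left to right and maintain the patience-sorting state, namely the minimal tails $p_1<p_2<p_3$ of the strictly increasing subsequences of lengths $1,2,3$ seen so far (with $p_3$ undefined while the longest increasing subsequence has length $<3$), together with the ceiling $\asc+1$ bounding the next letter. A new letter $v$ is legal precisely when $v\le\asc+1$ and, to preserve $0123$-avoidance, when $p_3$ is undefined or $v\le p_3$; appending $v$ then updates exactly one pile top. The difficulty is that the raw values are unbounded: the family $0\,(10)^k\,(k{+}1)$ avoids $0123$ yet forces both the letters and the ceiling to grow without limit, so the state $(p_1,p_2,p_3,\asc)$ ranges over infinitely many values. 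The crux is therefore a \emph{renormalization}: I would argue that only the order relations among $p_1,p_2,p_3$ and the ceiling $\asc+1$ decide which letters are legal and how the state transforms, so that the dynamics descend to a finite automaton on equivalence classes of states. I expect this automaton to be the tridiagonal transfer operator on the six heights $0,1,\ldots,5$, whose generating function is the standard depth-six continued fraction, a rational function whose denominator has degree $\lfloor 6/2\rfloor=3$, reproducing exactly the {\sc\small A080937} recurrence.

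The main obstacle will be making the finite-state reduction exact and identifying its transfer matrix with that of height-$\le5$ Dyck paths. In particular one must treat separately the transitions that introduce a new maximal letter (raising the ceiling, hence creating a new pile top and shifting the whole configuration) and those that reuse an existing value or repeat a letter, and then check that the resulting weighted transitions match, step for step, the up and down moves of a path confined to $[0,5]$. An appealing alternative, which I would pursue in parallel, is a direct bijection in which each letter of the ascent sequence is encoded by two lattice steps; under such an encoding the bound ``longest strictly increasing subsequence $\le3$'' should translate into ``height $\le 2\cdot3-1=5$,'' and the real work is to show the height bound is respected in both directions. Either way, the decisive step is to pin down the generating function and confirm that it equals the height-$\le5$ Dyck path generating function; everything else is bookkeeping.
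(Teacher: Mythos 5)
The statement you are addressing is Conjecture~\ref{conj-0123}: the paper offers no proof of it (it was an open problem at the time of writing), so the only question is whether your proposal itself constitutes a proof, and it does not. Your preliminary reductions are sound: avoiding $0123$ is indeed equivalent to having no strictly increasing subsequence of length $4$, and you are right that Lemma~\ref{lemma-rgf} is unavailable here since $0123$ is not a subpattern of $01012$ (your example $01013$ is apt). But everything after that is a plan rather than an argument.

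The decisive gap is the ``renormalization'' step, which you assert rather than prove, and which fails in its naive form. A transfer-matrix argument requires not only finitely many states but transition \emph{weights} that are functions of the state alone; here the number of legal letters $v$ with $v\le\min(p_3,\asc+1)$, and more generally the number of choices of $v$ falling in each interval determined by $p_1<p_2<p_3$ and the ceiling, equals the size of that interval, which is unbounded and is not determined by the order type --- your own example $0\,(10)^k(k{+}1)$ shows the gaps grow without limit. You would need to show either that all choices of $v$ within an interval are equivalent for all future counting purposes (false as stated, since they alter future interval sizes and hence future multiplicities) or to isolate a genuinely finite statistic; neither is done. A secondary omission: the state must also record the last letter of the sequence, not just the ceiling, because whether appending $v$ raises the ceiling depends on whether $v$ exceeds that letter, and this interacts with the patience-sorting update. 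Finally, the identification of the hoped-for automaton with the height-$\le 5$ Dyck path transfer operator is introduced with ``I expect''; no correspondence between transitions and up/down steps is exhibited, and the observation that both generating functions have denominator degree $3$ is numerology, not evidence --- the same remark applies to the parallel bijective sketch with its guessed height bound $2\cdot 3-1=5$. In short, you have identified a reasonable line of attack on what the paper leaves as a conjecture, but the two steps that would constitute the actual proof are missing.
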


\begin{conj}\label{conj-0021}
The patterns 0021 and 1012 are Wilf equivalent, and
$\numa_{0021}(n)=\numa_{1012}(n)$ is given by the binomial transform
of Catalan numbers, which is sequence A007317 in~\cite{oeis}.
\end{conj}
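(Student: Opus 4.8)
The plan is to prove the two assertions separately and then combine them. First I would identify the common target value as the binomial transform $a(n)=\sum_{k}\binom{n}{k}C_k$ of the Catalan numbers (sequence A007317), whose ordinary generating function $M(x)=\sum_n a(n)x^n$ satisfies the quadratic functional equation
\[
M(x)=\frac{1}{1-x}+x\,M(x)^2,
\]
equivalently $M(x)=\tfrac{1}{1-x}\,C\!\left(\tfrac{x}{1-x}\right)$, where $C$ is the Catalan generating function. I would then show that each of $\numa_{0021}(n)$ and $\numa_{1012}(n)$ equals $a(n-1)$, so that Wilf equivalence of $0021$ and $1012$ follows as an immediate corollary rather than from a single bijection between the two avoidance classes. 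I would not expect a clean direct bijection, precisely because the two classes contain different inner subclasses, as explained next.

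For the pattern $0021$ I would exploit the containment $\seta_{021}\subseteq\seta_{0021}$, which holds because deleting the first letter of any occurrence of $0021$ leaves an occurrence of $021$; by Theorem~\ref{thm-021} the subclass $\seta_{021}(k)$ is counted by $C_k$. The key structural step is to describe an arbitrary $0021$-avoider as a $021$-avoiding \emph{core} into which a set of \emph{forced} letters has been inserted. Concretely, I would analyze the constraint imposed by avoiding $0021$: once a value $a$ is repeated, the letters exceeding $a$ that occur after the repetition must be weakly increasing, since a later pair $c>b>a$ appearing in that order would complete an occurrence of $0021$. This should let me identify, canonically, a set of removable positions whose deletion returns a $021$-avoider and whose values are forced by the core together with their positions. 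If this insertion is shown to be reversible and value-forcing, then a core of length $k$ together with a choice of which of the remaining $n-1$ positions (after the mandatory initial $0$) carry inserted letters yields exactly $\sum_{k}\binom{n-1}{k}C_k=a(n-1)$ sequences.

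For the pattern $1012$ the analogous containment is $\seta_{012}\subseteq\seta_{1012}$ (again delete the first letter of an occurrence), but now the inner subclass $\seta_{012}$ is counted by $2^{n-1}$ by Theorem~\ref{thm-012}, \emph{not} by the Catalan numbers, so the decomposition must be different. Here I would instead set up a generating function carrying a catalytic variable that records the current number of ascents (which bounds the next admissible letter), and if necessary the value of the last letter, and derive a functional equation by appending one letter at a time while forbidding the configuration $x_q<x_p=x_r<x_s$. I would then solve by the kernel method and verify that the specialization of the catalytic variable to $1$ again yields the $M(x)$ above. Obtaining the same generating function from this genuinely different structure is what finally establishes $\numa_{1012}(n)=\numa_{0021}(n)$.

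The main obstacle, I expect, is twofold. On the $0021$ side it is proving that the core-plus-insertions map is a genuine bijection: one must pin down a canonical, reversible rule for which repeated letters are removable, and check both that reinsertion never creates a new occurrence of $0021$ and that each inserted value is uniquely forced. On the $1012$ side the obstacle is the kernel-method solution of the functional equation and the non-obvious simplification of the resulting algebraic expression to $\tfrac{1}{1-x}\,C\!\left(\tfrac{x}{1-x}\right)$. Note that Lemma~\ref{lemma-rgf} offers no shortcut here, since neither $0021$ nor $1012$ is a subpattern of $01012$; one therefore cannot first replace these patterns by longer, more rigid ones, and the enumeration must be carried out directly on the full, less constrained classes.
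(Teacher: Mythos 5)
This statement is Conjecture~\ref{conj-0021} in the paper: the authors give no proof of it at all (its only support is computer enumeration, plus the growth-rate consequence noted in Section~\ref{sec-conj}), so there is no argument of theirs to compare yours against; your proposal must stand on its own, and it does not. Both halves stop exactly where the difficulty begins, and the $0021$ half is, as stated, arithmetically inconsistent. Your structural observation is fine (once a value $a$ has occurred twice, all later letters exceeding $a$ must be weakly increasing, else a $0021$ is completed), but the asserted ``core plus forced insertions'' count cannot hold with cores being $021$-avoiders. If cores of length $k$ are counted by $C_k$ (Theorem~\ref{thm-021}) and the inserted positions are chosen freely among the $n-1$ non-initial slots, the number of (core, position-set) pairs is $\sum_k\binom{n-1}{k-1}C_k$, which at $n=3$ equals $1+4+5=10$, whereas $\numa_{0021}(3)=5$ (every ascent sequence of length $3$ trivially avoids a pattern of length $4$). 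So either most pairs are inadmissible or the inserted values are not forced; in either case the clean product formula does not follow. Worse, matching the target $\sum_k\binom{n-1}{k}C_k$ would require cores of length $k+1$ to be counted by $C_k$, contradicting Theorem~\ref{thm-021}. There is also a hidden issue in even defining the core: deleting letters from an ascent sequence need not leave an ascent sequence (from $0102$, deleting the $1$ leaves $002$, which is not one), so the existence of a $021$-avoiding ascent-sequence core is itself a claim requiring proof.

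On the $1012$ side the gap is of a different kind: you never derive the functional equation, and the state you propose to track (number of ascents, possibly the last letter) is insufficient to decide legality of the next letter. Whether appending $y$ creates a $1012$ depends on the least value $v$ that occurs as the repeated letter of an occurrence of $101$ in the prefix --- any $y>v$ is then forbidden --- and maintaining this statistic requires knowing, for each value present, whether some smaller letter has occurred after it. Setting up a finite catalytic-variable equation around this, solving it, and simplifying to $\frac{1}{1-x}\,C\!\left(\frac{x}{1-x}\right)$ is precisely the content of the conjecture; asserting that the kernel method ``will verify'' it is a hope, not a proof. The parts of your write-up that are correct --- the containments $\seta_{021}\subseteq\seta_{0021}$ and $\seta_{012}\subseteq\seta_{1012}$, and the identity $M(x)=\frac{1}{1-x}+xM(x)^2$ for the binomial transform of the Catalan numbers --- are the easy parts, and they do not reduce the problem.
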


It is easy to show that the Wilf equivalences proved or conjectured
here are the only ones possible for the set of all patterns of length
at most four.  This is because other pairs of sequences are seen, by
computer testing, to diverge.

Thus, the following are all the Wilf equivalences for patterns of
length at most four, where the cases of 0012 and the pair
$(0021,1012)$ depend on Conjectures~\ref{conj-0012} and
\ref{conj-0021}, respectively, and the other results on
Corollaries~\ref{coro-10-wilf} and \ref{coro-101-wilf} and
Theorem~\ref{thm-102-wilf}, in addition to the trivial equivalence of
00 and 01:
\begin{align*}
&00\sim 01,\\ &10\sim 001\sim 010\sim 011\sim 012,\\ &102\sim 0102\sim
  0112,\\ & 101\sim 021\sim 0101\sim 0012,\\ &0021\sim 1012.
\end{align*}

When computing the numbers of ascent sequences avoiding various pairs
of patterns of lengths 3 and 4 we found several sequences of numbers
recognized by the \oeis\ \cite{oeis}, such as Motzkin numbers, various
transformations of the Catalan and Fibonacci numbers, and sequences
counting permutations avoiding pairs of patterns. We also found some
apparent Wilf equivalences that might be interesting.  We do not list
any of these here, but we will gladly share our data with anybody who
might be interested.

Finally, we consider the \emph{modified ascent sequences} defined in
\cite[Section~4.1]{bcdk}.  Given an ascent sequence $\bx$, we
successively create sequences $\bx_1,\bx_2,\ldots,\bx_k$, where $k$ is
the number of ascents in $\bx$.  The sequence $\bx_{i+1}$ is
constructed from $\bx_i$ by increasing by 1 each letter in $\bx_i$
that precedes the $i$-th ascent in $\bx$ and is larger than or equal
to the larger letter in that ascent.  The sequence $\bx_k$ is then
defined to be the modified ascent sequence associated to $\bx$.
Observe that in constructing each $\bx_i$ we preserve the ascents in
$\bx$.  It is easy to see that this is an invertible process so that
modified ascent sequences are in bijection with ascent sequences.  As
an example, the modified ascent sequence of 010221212 successively
becomes 010331212 and 010441312, with the first two ascents causing no
changes.

\begin{conj}\label{conj-modi}
  On modified ascent sequences the patterns $101,0101,1021,1102,1120$,
  $1210$ are all Wilf equivalent, and the number of ascent sequences
  avoiding any one of these patterns is the $n$-th Bell number.  These
  sequences are thus equinumerous with partitions of an $n$-element
  set.  Moreover, the distribution of the number of ascents on such
  sequences is the reverse of the distribution of the number of blocks
  on set partitions.  That is, the number of modified ascent sequences
  of length $n$ with $k$ ascents, and avoiding any single one of these
  patterns, equals the number of set partitions of an $n$-element set
  with $n-k$ blocks.
\end{conj}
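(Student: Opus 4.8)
The plan is to treat the Bell-number statement as the heart of the conjecture and prove it first, then reduce the remaining patterns to it. The core claim is that the $101$-avoiding modified ascent sequences of length $n$ are counted by the Bell number $B_n$, with the refinement that those with exactly $k$ ascents number $S(n,n-k)$, the Stirling number of the second kind (note $\sum_{k}S(n,n-k)=\sum_{b}S(n,b)=B_n$, and $S(n,n-k)$ counts partitions of $\{1,\dots,n\}$ into $n-k$ blocks, which is exactly the refined assertion). To establish this I would construct an explicit statistic-preserving bijection $\Psi$ from $101$-avoiding modified ascent sequences to set partitions of $\{1,\dots,n\}$. The guiding principle is that a modified ascent sequence $\hat{\mathbf x}=\hat x_1\cdots\hat x_n$ with $k$ ascents has exactly $n-k$ positions that are not the top of an ascent (position $i$ is an ascent top iff $i\ge2$ and $\hat x_{i-1}<\hat x_i$, and the initial letter is never a top). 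These $n-k$ positions should become the block minima, while each ascent top attaches element $i$ to an already-opened block, the value $\hat x_i$ recording which one. The $101$-avoidance is precisely the hypothesis that makes this reading well defined and consistent, since in a $101$-avoiding sequence any value, once it is succeeded by a strictly smaller value, never recurs, which supplies the nesting structure of a genuine partition.

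I would define $\Psi$ recursively, using the block decomposition of modified ascent sequences coming from the construction in \cite{bcdk}, and then verify by induction both that $\Psi$ is a bijection and that $\asc(\hat{\mathbf x})$ equals $n$ minus the number of blocks of $\Psi(\hat{\mathbf x})$. Because ascents are preserved under the modification map, the resulting ascent distribution transfers verbatim to the underlying ascent sequences, yielding the ``reverse of the block distribution'' assertion. I expect this part to be the more mechanical one, once the right recursive description of $101$-avoiding modified ascent sequences is pinned down.

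For the Wilf equivalences I would exploit the fact that the six patterns split into two groups according to whether they contain $101$ as a subpattern. The patterns $0101$ and $1021$ each contain $101$, so their avoidance classes contain the $101$-class; to get equality (and hence Wilf equivalence) it suffices to prove completion lemmas showing that every occurrence of $101$ in a modified ascent sequence forces an occurrence of the longer pattern. For $101\sim0101$ this is the exact analogue of the step in Theorem~\ref{thm-101}: one must show that the first $b$ of an occurrence $\ldots b\ldots a\ldots b\ldots$ is preceded by an $a$, giving $abab$. Crucially, this argument cannot cite Lemma~\ref{lemma-rgf}, because modified ascent sequences need not be RGFs (for instance $010441312$ is not), so the needed forcing statement has to be re-derived directly from the modification procedure. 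The patterns $1102$, $1120$ and $1210$ do not contain $101$, so no containment shortcut is available for them; for these I would instead build direct bijections to set partitions sending $\asc\mapsto n-\#\text{blocks}$, or transfer maps onto the $101$-avoidance class, again grounded in the structural behaviour of the modification step.

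The main obstacle is exactly this loss of the RGF property. For ordinary ascent sequences, Lemma~\ref{lemma-rgf} carries the length-three and length-four pattern arguments almost single-handedly; for modified ascent sequences there is no such clean characterization at hand, so every forcing lemma and the bijection $\Psi$ itself must be proved from the recursive definition of the modification map. Within this, I expect the genuinely delicate cases to be the three patterns $1102,1120,1210$ that do not contain $101$, since their avoidance classes are neither sub- nor supersets of the $101$-class and therefore demand independent, statistic-respecting bijections whose consistency hinges on controlling how the modification step relocates and merges equal letters.
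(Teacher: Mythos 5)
The statement you are addressing is Conjecture~\ref{conj-modi}: the paper offers no proof of it at all (it records only that Bruce Sagan, in a personal communication, verified the case of the pattern $101$ with $k=1$), so there is no argument in the paper to compare yours against. The only question is whether your proposal itself constitutes a proof, and it does not: it is a research plan in which each of the three pillars is a placeholder. First, the central bijection $\Psi$ is never constructed. You state a guiding principle --- the $n-k$ positions that are not ascent tops should become block minima, and an ascent top at position $i$ should attach $i$ to an earlier block selected by the value $\hat x_i$ --- but you never specify the rule matching values to blocks, never verify that it is well defined, and never exhibit an inverse. The one structural fact you extract from $101$-avoidance (a value that is ever followed by a strictly smaller value cannot recur) is a literal restatement of what avoiding $101$ means, and it is a long way from there to a bijection carrying $\asc$ to $n$ minus the number of blocks. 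Second, for the patterns $0101$ and $1021$ your reduction to ``completion lemmas'' is the right shape of argument and mirrors the first step of the paper's Theorem~\ref{thm-101}; your observation that Lemma~\ref{lemma-rgf} cannot be invoked is correct and important (modified ascent sequences need not be RGFs; your example $010441312$ is indeed not one). But having disqualified the only available tool, you supply no replacement: the forcing statement is exactly what remains to be proved, and it is not routine, since for ordinary ascent sequences the forcing comes precisely from the RGF property that fails here. Third, for $1102$, $1120$ and $1210$ --- which, as you correctly note, do not contain $101$, so no containment shortcut exists --- you offer only the intention to build ``direct bijections,'' with no construction whatsoever.

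To be fair about what is sound: the arithmetic reduction $\sum_k S(n,n-k)=\sum_b S(n,b)=B_n$, so that the refined ascent claim implies the Bell-number count; the classification of the six patterns into those containing $101$ and those not; and the diagnosis that the loss of the RGF property is the central obstacle --- all of this is correct, and it is a reasonable map of where the difficulty lies. But no step of that map is actually traversed, so after your proposal the statement remains exactly as open as it stands in the paper.
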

Bruce Sagan (personal communication) has proved the above conjecture
in the case of the pattern 101 and $k=1$.

Note that if Conjecture~\ref{conj-modi} is true then the number of
modified ascent sequences avoiding $101$ would not have exponential
growth, since that is known to be false for the Bell numbers; they
grow faster than $C^n$ for any constant $C$.

\def\mp#1{\begin{minipage}[c]{10mm}\kern1ex\centering#1\\[-.5em]\phantom{a}
\end{minipage}}

\def\mb#1{\begin{minipage}[c]{15em}\kern2ex\centering#1\\[-.5em]\phantom{a}\end{minipage}}

\begin{table}[ftbp]
\small \centering
\begin{tabular}{|c|l|c|c|l|}
\hline Pattern $p$ & Number of sequences avoiding $p$ & {\oeis} & Formula & Reference\\ \hline \hline

\hline
%% \begin{minipage}[c]{10mm}\kern1ex\centering
%% 001\\010\\ 011\\012\\[-.5em] \phantom{a}\end{minipage}
\mp{001\\010\\ 011\\012}
&  \begin{minipage}[c]{15em}$1,2,4,8,16,32,64,128,256,512,\ldots$
   \end{minipage}
&A000079 & $2^{n-1}$ &Thms.~\ref{thm-001}, \ref{thm-012}\\

\hline \mp{102\\ 0102\\ 0112} & $1,2,5,14,41,122,365,1094,3281,9842,\ldots$ & A007051 & $(3^n+1)/2$ & Thm.~\ref{thm-102}\\

\hline
%% \begin{minipage}[c]{10mm}\kern1ex\centering 101\\021\\0101\\[-.5em]
%% \phantom{a}\end{minipage}
\mp{101\\021\\0101}
&  \mb{$1,2,5,14,42,132,429,1430,4862,16796,\ldots$}

& A000108 & $\displaystyle\frac{1}{n+1}\binom{2n}{n}$ & Thms.~\ref{thm-101}, \ref{thm-021}\\

\hline \mp{000} & \mb{$1,2,4,10,27,83,277,1015,4007,17047,$\\ $77451,374889,1923168,10427250,\ldots$} & & &\\

\hline \mp{100} & \mb{$1,2,5,14,44,153,583,2410,10721,50965,$\\ $257393,1374187,7722862,45520064,\ldots$} & & &\\

\hline \mp{110} & \mb{$1,2,5,14,43,143,510,1936,7774,32848,$\\ $145398,671641,3227218,16084747,\ldots$} & & &\\

\hline \mp{120} & \mb{$1,2,5,14,42,133,442,1535,5546,20754,$\\ $80113,317875,1292648,5374073,\ldots$} & & &\\

\hline \mp{201} & \mb{$1,2,5,15,52,201,843,3764,17659,86245,$\\ $435492,2261769,12033165,65369590,\ldots$} & & &\\

\hline \mp{210} & \mb{$1,2,5,15,52,202,859,3930,19095,97566,$\\ $520257,2877834,16434105,965054901,\ldots$} & A108304 & & Conj.~\ref{conj-210}\\

\hline \mp{0123} & \mb{$1,2,5,14,42,131,417,1341,4334,14041,$\\ $45542,147798,479779,1557649,\ldots$} & A080937 & & Conj.~\ref{conj-0123}\\

\hline \mp{0021\\ 1012} & \mb{$1,2,5,15,51,188,731,2950,12235,51822,$\\ $223191,974427,4302645,19181100\ldots$} & A007317 & & Conj.~\ref{conj-0021}\\

\hline
\end{tabular}
\caption{\label{table-numbers}Number sequences for pattern avoidance by ascent sequences. All patterns of length 3 are listed, a few of length 4. \oeis\ refers to entry in \cite{oeis}.}
\end{table}

\section{Acknowledgments}

We are grateful to Anders Claesson, Mark Dukes, Sergey Kitaev and Jeff
Remmel for useful discussions and suggestions.  We are also deeply
indebted to a referee who corrected a few errors and made many useful
suggestions, which led to a substantial improvement of the
presentation.  In particular, that referee pointed out the bijection
now used in the proof of Theorem~\ref{thm-101}.

%\pagebreak

\end{document}